\def\s{\mathbb{S}}
\def\h{\mathbb{H}}
\def\r{\mathbb{R}}
\def\z{\mathbb{Z}}
\def\c{\mathbb{C}}
\def\p{\mathbb{P}}
\def\m{\mathbb{M}}
\newtheorem{theorem}{Theorem}
\newtheorem{proposition}{Proposition}
\theoremstyle{definition}
\theoremstyle{remark}
  \newtheorem{remark}{Remark}
\numberwithin{equation}{section}
\DeclareMathOperator{\arctanh}{arctanh}
\DeclareMathOperator{\arcsinh}{arcsinh}
\begin{document}

\title[Compact stable CMC surfaces in the Berger spheres]{Compact stable constant mean curvature surfaces in the Berger spheres}

\author{Francisco Torralbo}
\address{Departamento de Geometr\'{\i}a  y Topolog\'{\i}a \\
Universidad de Granada \\
18071 Granada, SPAIN} \email{ftorralbo@ugr.es}

\author{Francisco Urbano}
\address{Departamento de Geometr\'{\i}a  y Topolog\'{\i}a \\
Universidad de Granada \\
18071 Granada, SPAIN} \email{furbano@ugr.es}

\thanks{Research partially supported by a MCyT-Feder research project MTM2007-61775 and a Junta Andalucía  Grant P06-FQM-01642.}



\date{}
\begin{abstract}
In the $1$-parameter family of Berger spheres $\{\s^3_{\alpha},\,\alpha>0\}$ ($\s^3_1$ is the round $3$-sphere of radius $1$) we classify the stable constant mean curvature spheres, showing that in some Berger spheres ($\alpha$ close to $0$) there are unstable constant mean curvature spheres. Also, we classify the orientable compact stable constant mean curvature surfaces in $\s^3_{\alpha},\,1/3\leq\alpha<1$ proving that they are spheres or the minimal Clifford torus in $\s^3_{1/3}$. This allows to solve the isoperimetric problem in these Berger spheres.
\end{abstract}

\maketitle
\section{Introduction}
Let $\Phi:\Sigma\rightarrow M$ be an immersion of a compact orientable surface in a three dimensional oriented Riemannian manifold $M$. The immersion $\Phi$ has constant mean curvature if and only if it is a critical point of the area functional for variations that leave constant the volume {\it enclosed} by the surface.  So, it is natural to consider the second variation operator for such an immersion $\Phi$. This operator, known as {\it the Jacobi operator } of $\Phi$ is the Schrödinger operator $L:C^{\infty}(\Sigma)\rightarrow C^{\infty}(\Sigma)$ defined by:
\[
L=\Delta+|\sigma|^2+\text{Ric}(N),
\]
where $\Delta$ is the Laplacian operator of the induced metric on $\Sigma$, $\sigma$ is the second fundamental form of $\Phi$, $N$ is a unit normal field to the immersion and $\text{Ric}$ is the Ricci curvature of $M$. Let $Q:C^{\infty}(\Sigma)\rightarrow\r$ be the quadratic form associated to $L$, defined by
\[
Q(f)=-\int_{\Sigma}fLf\,dA=\int_{\Sigma}\{|\nabla f|^2-(|\sigma|^2+\text{Ric}(N))f^2\}\,dA.
\]
In this context we say that $\Phi$ is {\it stable} if the second variation of the area is non-negative for all volume preserving variations, i.e. variations whose variational vector field $fN$ satisfies $\int_{\Sigma}f\,dA=0$. So the stability of $\Phi$ means that
\begin{equation}\label{eq:def-stability}
Q(f)\geq 0,\quad \forall\, f\in C^{\infty}(\Sigma)\quad\text{with}\quad\int_{\Sigma}f\,dA=0.
\end{equation}
The first stability results of constant mean curvature surfaces are due to Barbosa, DoCarmo and Eschenburg (\cite{BC, BCE}) where they proved that the only stable compact orientable constant mean curvature surfaces of the Euclidean $3$-space, the $3$-dimensional sphere and the $3$-dimensional hyperbolic space are the geodesic spheres (in fact they proved the result not only for surfaces but for hypersurfaces in any dimension). Later  Ritoré and Ros \cite{RR} studied stable constant mean curvature surfaces of $3$-dimensional real space forms, proving that stable constant mean curvature tori in three space forms are flat and classifying completely the orientable compact stable constant mean curvature surfaces of the three dimensional real projective space. Also  Ros \cite{R3} studied stability of constant mean curvature surfaces in $\r^3/\Gamma$, $\Gamma\subset\r^3$ being a discrete subgroup of translation with rank $k$, proving that the genus of a compact stable constant mean curvature surface in $\r^3/\Gamma$ is less than or equal to $k$.  Finally  Ros \cite{R2} improved many previous results in which the genus of an orientable compact stable constant mean curvature surface was bounded below. He proved that the genus $g$ of an orientable compact stable constant mean curvature surface of an orientable three dimensional manifold with nonnegative Ricci curvature is $g\leq 3$.

In the last years, the constant mean curvature surfaces of the homogeneous Riemannian $3$-manifolds are been deeply studied. The starting point was the work of Abresch and Rosenberg \cite{AR}, where they found out a holomorphic quadratic differential in any constant mean curvature surface of a homogeneous Riemannian $3$-manifold with isometry group of dimension $4$. The Berger spheres, the Heisenberg group, the Lie group $SL(2,\r)$ and the Riemannian products $\s^2\times\r$ and $\h^2\times\r$, where $\s^2$ and $\h^2$ are the $2$-dimensional sphere and hyperbolic plane with their standard metrics, are the most relevant examples of such homogeneous $3$-manifolds. Souam \cite{S} studied stability of compact constant mean curvature surfaces of $\s^2\times\r$ and $\h^2\times\r$. In the case of $\s^2\times\r$ he classified the compact stable constant mean curvature surfaces, proving that they are certain constant mean curvature spheres.

In this paper we study stability of compact constant mean curvature surfaces in the Berger spheres. The Berger spheres are a $1$-parameter family of metrics $\{g_\alpha\,,\,\alpha>0\}$ on the $3$-dimensional sphere which deform the standard metric $g_1$. Section $2$ describes and shows the most important properties of these $3$-manifolds. Our first result is the classification of the stable constant mean curvature spheres (Theorem \ref{thm:stability-spheres}), showing that in almost all the Berger spheres they are stable, but there are some Berger spheres (with $\alpha$ close to $0$) in which some constant mean curvature spheres are unstable. The proof of Theorem \ref{thm:stability-spheres} can be adapted to other homogeneous Riemannian $3$-manifolds. Theorem \ref{thm:stability-spheres-Nil3-Sl} proves that all the constant mean curvature spheres in the Heisenberg group and in the Lie group $\mathrm{Sl}(2,\r)$ are stable. The main idea in the proof is to realize that the quadratic forms associated to the Jacobi operators of all the constant mean curvature spheres of the Berger spheres, the Heisenberg group and $\mathrm{Sl}(2,\r)$ are the same.

Perhaps one of the most surprising facts in the paper appears in Proposition \ref{prop:stability-flat-tori}, where we construct examples of stable constant mean curvature tori in certain Berger spheres, those with $\alpha\leq 1/3$ (when $\alpha=1$, i.e. in the round sphere, only spheres can be stable \cite{BCE}). Section 6 establishes  two stability results (Theorems 6 and 7). The first one proves that for the Berger spheres with $\alpha_1\leq\alpha\leq 4/3,\, \alpha_1\approx 0.217$, the compact stable constant mean curvature surfaces must be spheres (in this case all of them are stable) or embedded tori. The second one, for the Berger spheres with $1/3\leq\alpha<1$ ($\alpha_1<1/3$), classifies completely the compact stable constant mean curvature surfaces, proving that they are spheres or $\alpha=1/3$ and the surface is the Clifford torus (which is also minimal in this Berger sphere).

One of the most important ingredients in the proof of Theorems 6 is to consider the Berger spheres as hypersurfaces of the complex projective plane or the complex hyperbolic plane (see section 2) and to use a lower bound for the Willmore functional of surfaces in these four manifolds obtained by Montiel and Urbano in \cite{MU}.  In the proof of Theorem \ref{thm:compact-CMC-stable-1/3<alpha<1}, and because the Berger spheres appearing in it ($1/3\leq\alpha<1$) are hypersurfaces of the complex projective plane, we can consider, via the first standard embedding of the complex projective plane in $\r^8$, the surfaces of these Berger spheres as surfaces of $\r^8$. So we can use the functions $X:\Sigma\rightarrow\r^8$, $X$ being a harmonic vector field in the surface, as test functions to study stability. The idea to consider harmonic vector fields to study stability had been used by Palmer \cite{P} and Ros \cite{R2,R3}.

Finally, in the last section and as an application of our results, we solve the isoperimetric problem for the Berger spheres $\s^3_{\alpha},\,1/3\leq\alpha<1$.

The authors would like to thank A. Ros for his valuable comments about the paper.

\section{Berger spheres}
Let $\s^3=\{(z,w)\in\c^2\,|\,|z|^2+|w|^2=1\}$ the unit sphere, $g$ the standard metric of constant curvature $1$ on $\s^3$
and $V$ the vector field on the sphere given by $$V_{(z,w)}=(iz,iw),\quad (z,w)\in\s^3.$$
On $\s^3$ there are a $2$-parameter family of Berger metrics, but up to homothecy, this family is reduced to the following 1-parameter family of metrics $\{g_{\alpha},\,\alpha>0\}$,
defined by
\[
g_{\alpha}(X, Y) =   g( X, Y) + (\alpha-1) g( X, V) g( Y, V),\quad \alpha>0.
\]
We note that $g_1=g$  and that $g_{\alpha}(V,V)=\alpha$. Along the paper we will discover the big difference between these metrics depending of the sign of $\alpha-1$. From now on will denote by $\s^3_{\alpha}=(\s^3,g_{\alpha})$.

Some important and well-known properties of these Riemannian $3$-manifolds, which will be taking into account along the paper, are the following:

\begin{enumerate}
\item $\s^3_{\alpha}$, $\alpha\not=1$ are homogeneous Riemannian manifolds with isometry group $U(2)$.

\item The Hopf fibration $\Pi:\s^3_{\alpha}\rightarrow\s^2(1/2)$, defined by $$\Pi(z,w)=(z\bar{w},\frac{|z|^2-|w|^2}{2}),$$ is a circle Riemannian submersion onto the $2$-dimensional sphere of radius $1/2$, with totally geodesic fibers and the unit vertical vector field $\xi=\frac{V}{\sqrt\alpha}$ is a Killing field. The bundle curvature is $\sqrt{\alpha}$.

\item The Ricci curvature $S$ of $\s^3_{\alpha}$ is bounded by
$$
S\geq
\begin{cases}
2\alpha, &\hbox{when}\quad\alpha<1\\
2(2-\alpha), &\hbox{when}\quad \alpha>1.
\end{cases}
$$
\item The scalar curvature of $\s^3_{\alpha}$ is $2(4-\alpha)$.
\end{enumerate}
Now we are going to identify these Berger spheres with nice hypersurfaces of very well-known Riemannian $4$-manifolds. This approach to the Berger spheres will be crucial in the proof of some results in the paper.

Let $\m^4(c)$, $c\not=0$ be the complex projective plane $\c\p^2(c)$ of constant holomorphic sectional curvature $4c$ when $c>0$ and the complex hyperbolic plane $\c\h^2(c)$ of constant holomorphic sectional curvature $4c$ when $c<0$, i.e.
\[
\m^4(c)=\{[(z_0,z_1,z_2)]\,|\, z_j \in \c, \, \frac{c}{|c|}|z_0|^2 + |z_1|^2 + |z_2|^2 = \frac{1}{c}\}.
\]

\begin{proposition}\label{prop:embedding-berger-M4}
Let $F_{\alpha}:\s^3_\alpha\rightarrow \m^4(1-\alpha),\,\alpha\not=1$ be the map given by
\[
F_\alpha(z,w)=[(\sqrt{\frac{\alpha}{|1-\alpha|}},z,w)].
\]
Then $F_{\alpha}$ is an isometric embedding of the Berger sphere $\s^3_\alpha$ into $\m^4(1-\alpha)$ and $$F_{\alpha}(\s^3_\alpha)=\{[(z_0, z_1,z_2)]\in \m^4(1-\alpha)\,|\,|z_0|^2=\alpha/|1-\alpha|\}$$ is the geodesic sphere of $\m^4(1-\alpha)$ of center $[(1/\sqrt{|1-\alpha|},0,0)]$ and radius
\[
r=
\begin{cases}
\frac{\arcsin\sqrt{1-\alpha}}{\sqrt{1-\alpha}}\quad  \text{when}\quad \alpha<1\\  \frac{\arcsinh\sqrt{\alpha-1}}{\sqrt{\alpha-1}}\quad \text{when} \quad \alpha>1.
\end{cases}
\]
\end{proposition}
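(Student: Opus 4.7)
The plan is to split the statement into three parts: that $F_\alpha$ lands in $\m^4(c)$ (writing $c=1-\alpha$, so that $\m^4(c)=\c\p^2(c)$ when $\alpha<1$ and $\m^4(c)=\c\h^2(c)$ when $\alpha>1$) with image equal to the prescribed hypersurface, that this hypersurface is the geodesic sphere with the claimed center and radius, and that $F_\alpha$ is an isometry.

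For the first part, I would substitute the coordinates of $F_\alpha(z,w)$ into the defining equation of $\m^4(c)$ and use $|z|^2+|w|^2=1$ to verify $\tfrac{c}{|c|}\cdot\tfrac{\alpha}{|c|}+1=\tfrac{1}{c}$ in both signs of $c$. Injectivity is immediate because two representatives of the same class differ by a unit scalar, which must equal $1$ once the first coordinate is fixed at the positive real number $\sqrt{\alpha/|c|}$. The characterization of the image follows by using the $S^1$-action to normalize $z_0$ to $\sqrt{\alpha/|c|}$ in any class with $|z_0|^2=\alpha/|c|$; the defining constraint then forces $|z_1|^2+|z_2|^2=1$, so $(z_1,z_2)\in\s^3$.

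For the geodesic-sphere claim, the subgroup of the isometry group of $\m^4(c)$ that acts on the last two coordinates through $U(2)$ fixes $[(1/\sqrt{|c|},0,0)]$ and acts transitively on $F_\alpha(\s^3)$. Since $\m^4(c)$ is a two-point homogeneous space, the orbits of this isotropy subgroup are exactly the geodesic spheres centered at that point, so $F_\alpha(\s^3)$ is one of them. For the radius, I would apply the standard Fubini--Study (resp.\ Bergman) distance formula: the representatives $(1/\sqrt{|c|},0,0)$ and $(\sqrt{\alpha/|c|},z,w)$ both have (indefinite) Hermitian squared norm $1/c$, and their (indefinite) Hermitian inner product has modulus $\sqrt{\alpha}/|c|$. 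This gives $r=(1/\sqrt{c})\arccos\sqrt{\alpha}$ when $c>0$ and $r=(1/\sqrt{|c|})\arccosh\sqrt{\alpha}$ when $c<0$; the identities $\arccos\sqrt{\alpha}=\arcsin\sqrt{1-\alpha}$ and $\arccosh\sqrt{\alpha}=\arcsinh\sqrt{\alpha-1}$ then match the formulas in the statement.

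The isometry property is where the bookkeeping is most delicate and is the main obstacle. My argument would be as follows. Let $h$ be the (possibly indefinite) Hermitian form $\tfrac{c}{|c|}u_0\bar v_0+u_1\bar v_1+u_2\bar v_2$ on $\c^3$, whose real part descends via the $S^1$-bundle to the metric on $\m^4(c)$. At $p=F_\alpha(z,w)$ the vertical direction is $ip$, with $\mathrm{Re}\,h(ip,ip)=1/c$. A tangent vector $(A,B)\in T_{(z,w)}\s^3$ is sent to $X=(0,A,B)$, which is $\mathrm{Re}\,h$-tangent to the constraint hypersurface because $\mathrm{Re}(A\bar z+B\bar w)=0$. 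A direct computation gives $\mathrm{Re}\,h(X,ip)=\mathrm{Im}(A\bar z+B\bar w)$, so subtracting the vertical projection from $X$ leaves a horizontal component whose squared length is
\[
|A|^2+|B|^2-c\,[\mathrm{Im}(A\bar z+B\bar w)]^2.
\]
Since $V_{(z,w)}=(iz,iw)$ implies $g((A,B),V)=\mathrm{Im}(A\bar z+B\bar w)$, substituting $c=1-\alpha$ identifies this expression with $g_\alpha((A,B),(A,B))=g+(\alpha-1)g(\cdot,V)^2$. The only care needed is to keep track of signs when $c<0$: the fibers are then timelike while the horizontal subspace remains spacelike, so the Pythagorean identity $\|X^H\|^2=\|X\|^2-\|X^V\|^2$ still holds and the same calculation goes through.
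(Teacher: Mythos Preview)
The paper omits the proof of this proposition entirely, saying only that ``it is straightforward.'' Your proposal is a correct and complete execution of exactly the kind of direct verification the authors had in mind: checking the defining equation, normalizing the first coordinate to identify the image, using the isotropy action in a two-point homogeneous space to recognize a geodesic sphere, reading off the radius from the Fubini--Study/Bergman distance formula, and computing the pullback metric by projecting to the horizontal subspace of the $S^1$-bundle. All your computations check out, including the sign bookkeeping in the indefinite case $c<0$.
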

We omit the proof of Proposition \ref{prop:embedding-berger-M4} because it is straightforward. Note that the Berger spheres $\s^3_\alpha$ with $\alpha<1$ are geodesic spheres of the complex projective plane, whereas the Berger spheres $\s^3_{\alpha}$ with $\alpha>1$ are geodesic spheres of the complex hyperbolic plane.

\vspace{0.2cm}

Now we are going to show that other important homogeneous Riemannian $3$-manifolds can be also view as hypersurfaces of $\c\h^2$.

First Tomter \cite{To} showed that the Heisenberg group $\mathrm{\mathrm{Nil}}_3$ can be isometrically embedded in $\c\h^2$ as a horosphere. Second we are going to show that  $\mathrm{Sl}(2,\r)$ can be embedded in $\c\h^2$. We consider $\mathrm{Sl}(2,\r)$ as $$\mathrm{Sl}(2,\r)\equiv\{(z,w)\in\c^2\,|\,|z|^2-|w|^2=1\}$$ and the trivialization of its tangent bundle given by the vector fields $V_{(z,w)}=(iz,iw),\,E^1_{(z,w)}=(\bar{w},\bar{z}),\,E^2_{(z,w)}=(i\bar{w},i\bar{z})$. Then we can define  a $1$-parameter family of metrics $\{g_\beta\,|\,\,\beta>0\}$ on $\mathrm{Sl}(2,\r)$  by
\begin{eqnarray*}
g_\beta(E^i,E^j)=\delta_{ij},\quad g_\beta(V,V)=\beta,\\ g_\beta(V,E^i)=0,\quad i=1,2.
\end{eqnarray*}
In the next result we described, without proof again, the more relevant properties of these Riemannian $3$-manifolds.
\begin{proposition}~ \label{prop:properties-berger-spheres}
\begin{enumerate}
 \item $(\mathrm{Sl}(2,\r),g_\beta)$ is a Riemannian homogeneous $3$-manifold with isometry group $U^1(2)$.
 \item The Hopf fibration $\Pi:\mathrm{Sl}(2,\r)\rightarrow\h^2(-4)$, defined by $$\Pi(z,w)=(z\bar{w},\frac{|z|^2+|w|^2}{2}),$$ is a circle Riemannian submersion from $(\mathrm{Sl}(2,\r),g_\beta)$ onto the $2$-dimensional hyperbolic plane of constant curvature $-4$, with totally geodesic fibers and the unit vertical vector field $\xi=\frac{V}{\sqrt{\beta}}$ is a Killing field. The bundle curvature is $\sqrt\beta$.
     \item The map
\begin{eqnarray*}
G_\beta:(\mathrm{Sl}(2,\r),g_\beta)&\longrightarrow&\c\h^2(-(1+\beta))\\
(z,w)&\mapsto&[(z,w ,\sqrt{\beta/(1+\beta)})],
\end{eqnarray*}
is an isometric embedding with $$G_\beta(\mathrm{Sl}(2,\r))=
M_\beta:=\{[(z_0,z_1,z_2)]\in\c\h^2(-(1+\beta))\,|\,|z_2|=\sqrt{\beta/(1+\beta)}\}.$$
\end{enumerate}
\end{proposition}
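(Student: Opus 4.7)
The plan is to verify each of the three items by a short direct computation, in parallel to the omitted proof of Proposition~\ref{prop:embedding-berger-M4}.

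Starting with part~(3), I would first check that $G_\beta$ is well-defined and lands in $M_\beta$. Setting $(z_0,z_1,z_2)=(z,w,\sqrt{\beta/(1+\beta)})$, the defining equation of $\c\h^2(-(1+\beta))$, namely $-|z_0|^2+|z_1|^2+|z_2|^2=-1/(1+\beta)$, reduces to $|z|^2-|w|^2=1$, which is exactly the defining condition on $\mathrm{Sl}(2,\r)$; moreover $|z_2|^2=\beta/(1+\beta)$ is constant, so the image lies in $M_\beta$. Injectivity is immediate from the non-vanishing last coordinate. For the isometric statement I would compute $dG_\beta(V)$, $dG_\beta(E^1)$, $dG_\beta(E^2)$, project each onto the horizontal distribution of the submersion $\c^3\setminus\{0\}\to\c\h^2(-(1+\beta))$ (i.e.\ subtract the complex radial and $U(1)$-fibre components with respect to the pseudo-Hermitian form of signature $(1,2)$), and evaluate the induced K\"ahler metric, checking that the Gram matrix equals $\mathrm{diag}(\beta,1,1)$, i.e.\ $g_\beta$.

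For parts~(1)--(2), I would identify $(\mathrm{Sl}(2,\r),g_\beta)$ with $\mathrm{SU}(1,1)$ carrying a right-invariant metric, via the standard identification $(z,w)\leftrightarrow\bigl(\begin{smallmatrix}z&w\\ \bar w&\bar z\end{smallmatrix}\bigr)$, so that $V,E^1,E^2$ become equivariant under an action of the four-dimensional group $U^1(2)=U(1)\cdot\mathrm{SU}(1,1)$. This group acts transitively and preserves the frame $\{V/\sqrt\beta,E^1,E^2\}$ up to a rotation of $(E^1,E^2)$ induced by the $U(1)$-factor; since this frame is $g_\beta$-orthonormal at every point, $g_\beta$ is $U^1(2)$-invariant and hence homogeneous. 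For the fibration I would compute
\[
|z\bar w|^2-\left(\frac{|z|^2+|w|^2}{2}\right)^2=-\left(\frac{|z|^2-|w|^2}{2}\right)^2=-\frac{1}{4},
\]
confirming that $\Pi$ lands in $\h^2(-4)$. The fibres are the $U(1)$-orbits $(z,w)\mapsto(e^{it}z,e^{it}w)$ generated by $V$, so $\xi=V/\sqrt\beta$ is the unit vertical vector field; $U(1)$-invariance of $g_\beta$ established above makes $\xi$ a Killing field. Orthonormality of $d\Pi(E^1)$ and $d\Pi(E^2)$ in the hyperbolic metric of $\h^2(-4)$ is a direct coordinate check at the base point.

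The main technical obstacle is the bundle curvature. Computing $[E^1,E^2]$ as an ambient bracket in $\c^2$ and extracting its vertical component with respect to $V$ (using $g_\beta(V,V)=\beta$) should yield $[E^1,E^2]^v=-2\sqrt\beta\,\xi$, and hence bundle curvature $\tau=\sqrt\beta$. The sign and normalisation conventions require care here because the base $\h^2(-4)$ has negative curvature, in contrast to the Berger-sphere case; a comparison with the well-understood $\beta=1$ value (the canonical metric on $\widetilde{\mathrm{PSL}}(2,\r)$) is a useful consistency check, and the remaining steps are entirely parallel to the verification sketched for Proposition~\ref{prop:embedding-berger-M4}.
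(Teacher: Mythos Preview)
The paper does not actually prove this proposition: immediately before it the authors write ``In the next result we described, without proof again, the more relevant properties of these Riemannian $3$-manifolds,'' in direct analogy with the omitted verification of Proposition~\ref{prop:embedding-berger-M4}. Your proposal---checking the defining quadratic form for part~(3), identifying the $U^1(2)$-action and the fibre structure for parts~(1)--(2), and reading off the bundle curvature from the vertical component of $[E^1,E^2]$---is precisely the ``straightforward'' computation the authors have in mind, and it is carried out in the right order and with the right ingredients. One minor point: the ambient bracket actually gives $[E^1,E^2]=2V=2\sqrt{\beta}\,\xi$ with a positive sign, but as you already note this is only a matter of orientation convention and does not affect the conclusion $\tau=\sqrt{\beta}$.
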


In general an oriented hypersurface of $\m^4(c)$  is called {\it pseudo-umbilical} if the shape operator associated to a unit normal vector field $\eta$ has two constant principal curvatures, $\lambda$ and $\mu$ of multiplicities $2$ and $1$ respectively and $J\eta$ is an eigenvector of $\mu$, being $J$ the complex structure of $\m^4(c)$.  Montiel and Takagi classified the pseudo-umbilical hypersurfaces obtaining the following result:

\begin{theorem}[\cite{M, T}] \label{thm:pseudo-umbilical-hypersurfaces-M4}
The geodesic spheres, the horosphere and the hypersurfaces $\{M_\beta\,|\,\beta>0\}$ are the unique pseudo-umbilical hypersurfaces of $\m^4(c)$. Moreover in all the cases the Killing field $\xi$ on the hypersurface is given by $\xi=J\eta$, where $\eta$ is a unit normal vector field.
\end{theorem}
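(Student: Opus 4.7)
The plan is to extract the rigid infinitesimal data of a pseudo-umbilical hypersurface $\Sigma\subset\m^4(c)$ from the Weingarten and Codazzi equations, and then to identify $\Sigma$ globally as a tube around a totally geodesic submanifold whose type selects one of the three listed families. Throughout, let $\eta$ be a unit normal, $\xi=J\eta$, and $\lambda$ (multiplicity $2$, on $\xi^\perp$) and $\mu$ (simple, on $\xi$) the eigenvalues of the shape operator $A$.

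\textbf{Step 1: infinitesimal structure.} Since $J$ is parallel in $\m^4(c)$, applying $J$ to $\bar\nabla_X\eta=-AX$ gives $\bar\nabla_X\xi=-JAX$, so that
\[ \nabla_X\xi=-\lambda\,JX\quad(X\in\xi^\perp),\qquad \nabla_\xi\xi=0. \]
Skew-symmetry of $J$ then forces $g(\nabla_X\xi,Y)+g(\nabla_Y\xi,X)=0$ for all tangent $X,Y$, so $\xi$ is a Killing field on $\Sigma$; this already yields the ``moreover'' clause. A direct computation with the formulas above gives, for unit $X\in\xi^\perp$,
\[ (\nabla_X A)(JX)-(\nabla_{JX}A)(X)=2\lambda(\lambda-\mu)\,\xi, \]
while the explicit curvature tensor of $\m^4(c)$ supplies $\bar R(X,JX)\eta=-2c\,\xi$. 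The Codazzi equation therefore enforces the algebraic relation
\[ \lambda^2-\lambda\mu=c, \]
so both principal curvatures are constant and tied together by a single parameter.

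\textbf{Step 2: orbits of $\xi$ and tube reconstruction.} From $\bar\nabla_\xi\xi=\mu\eta$ and $\bar\nabla_\xi\eta=-\mu\xi$ the $J$-invariant $2$-plane $\text{span}\{\xi,\eta\}$ is preserved along every integral curve of $\xi$; hence each orbit lies in a totally geodesic complex line $\m^1(c)\subset\m^4(c)$, where it is a small circle (or a horocycle, or a geodesic) of geodesic curvature $|\mu|$. Thus $\Sigma$ is foliated by these Killing orbits. The normal exponential map $(p,s)\mapsto\exp_p(s\eta_p)$ then produces an equidistant family, and a standard Jacobi-field analysis along the normal geodesics---driven by the constancy of $\lambda,\mu$ and the relation $\lambda^2-\lambda\mu=c$---locates the first focal set $N$. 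Exhausting the possibilities for $N\subset\m^4(c)$ yields exactly three cases: $N$ is a single point (so $\Sigma$ is a geodesic sphere of $\m^4(c)$); $N$ is a totally geodesic $\c\h^1\subset\c\h^2(c)$ (so $\Sigma$ is a tube around $N$, matching the defining equation of some $M_\beta$ from Proposition~\ref{prop:properties-berger-spheres}); or no finite focal point exists, which forces $c<0$ with $\lambda^2=-c$ and identifies $\Sigma$ as a horosphere. A direct check confirms that each of these hypersurfaces is pseudo-umbilical with $J\eta$ as the distinguished eigenvector.

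\textbf{Main obstacle.} The delicate ingredient is the global step: verifying that the purely infinitesimal data of Step~1 assemble into the entire equidistant tube around $N$ (not merely an open piece) and that no exotic focal submanifolds can intervene. This rests on completeness of $\Sigma$, on the classification of totally geodesic complex submanifolds of $\c\p^2$ and $\c\h^2$, and on the Jacobi-field computation, in which the algebraic constraint $\lambda^2-\lambda\mu=c$ is what cuts down the admissible geometry of $N$ to exactly the three tabulated types.
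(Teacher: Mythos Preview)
The paper does not prove this theorem: it is quoted from the cited references of Montiel~\cite{M} (for $\c\h^2$) and Takagi~\cite{T} (for $\c\p^2$) and used as a black box, so there is no in-paper argument to compare against.

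That said, your outline does follow the strategy of those original sources: extract the Codazzi constraint $\lambda^2-\lambda\mu=c$, observe that the Hopf flow of $\xi=J\eta$ foliates $\Sigma$ by curves of constant geodesic curvature lying in totally geodesic complex lines, and then recover $\Sigma$ as a tube over its focal set. Two small corrections are worth making. First, the constancy of $\lambda$ and $\mu$ is part of the paper's \emph{definition} of ``pseudo-umbilical,'' so it is hypothesis rather than a consequence of Codazzi; the genuine content of Step~1 is only the relation $\lambda^2-\lambda\mu=c$. Second, the ``moreover'' clause is not asserting that $J\eta$ is Killing (which is what you prove), but that $J\eta$ coincides with the \emph{specific} vertical Killing field $\xi$ already introduced on each model via the Hopf fibration; this is immediate from the explicit embeddings $F_\alpha$ and $G_\beta$, but it is a slightly different claim. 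The hard global step---showing the focal set is totally geodesic and complex (or a point, or at infinity), and that in $\c\p^2$ the tube over $\c\p^1$ collapses back to a geodesic sphere so that the trichotomy only opens up when $c<0$---is exactly where the work in \cite{M,T} lies, and you have correctly identified it as the main obstacle rather than carried it out.
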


In the case of the Berger spheres, the principal curvatures $\lambda$ and $\mu$ of the isometric embedding $F_\alpha:\s^3_\alpha\rightarrow\m^4(1-\alpha)$ are given by $\lambda=\sqrt{\alpha},\,\mu=\frac{2\alpha-1}{\sqrt\alpha}.$ Hence if $\hat{\sigma}$ is the second fundamental form of the immersion $F_{\alpha}$, then it follows that
\begin{equation}\label{eq:second-ff-berger-M4}
\langle\hat{\sigma}(v,w),\eta\rangle=\sqrt\alpha\langle v,w\rangle+\frac{\alpha-1}{\sqrt\alpha}\langle v,\xi\rangle\langle w,\xi\rangle,
\end{equation}
for any vectors $v,w$ tangent to $\s^3_{\alpha}$, where $\langle,\rangle$ denotes the metric of $\m^4(1-\alpha)$, as well as the induced metric $g_{\alpha}$.

To finish this section, we are going to consider the first standard isometric embedding $\Psi:\c\p^2(1-\alpha)\rightarrow\r^8,\,\alpha<1$ of the complex projective plane into the Euclidean space $\r^8$. The geometric properties of this embedding were studied in \cite{R1} and we emphasize the following one which will be use in the proof of Theorem $7$.

If $\bar{\sigma}$ is the second fundamental form of the first isometric embedding of $\c\p^2(1-\alpha)$ in $\r^8$, then
\begin{equation}\label{eq:second-ff-M4-R8}
\begin{split}
&\langle\bar{\sigma}(x,y),\bar{\sigma}(v,w)\rangle= 2(1-\alpha)\langle x,y\rangle\langle v,w\rangle \\
+(1-\alpha) \bigl(&\langle x,w\rangle\langle y,v\rangle +
\langle x,v\rangle\langle y,w\rangle+\langle x,Jw\rangle\langle y,Jv\rangle+\langle x,Jv\rangle\langle y,Jw\rangle \bigr),
\end{split}
\end{equation}
for any vectors $v,w,x,y$ tangent to $\c\p^2$, where $J$ is the complex structure of $\c\p^2$ and $\langle,\rangle$ denotes the Euclidean metric of $\r^8$.
\section{Surfaces in the Berger spheres}
In this section we are going to set out some known properties of constant mean curvature surfaces of $\s^3_{\alpha}$, which will be use along the paper.

First of all, since  $\xi$ is a unit Killing field on $\s^3_{\alpha}$, for any vector field $X$ tangent to $\s^3_{\alpha}$ we have that
\begin{equation}\label{eq:killing-covariant-derivative}
\nabla^{\alpha}_X\xi=\sqrt{\alpha}(X\wedge\xi),
\end{equation}
where $\nabla^{\alpha}$ is the connection on $\s^3_{\alpha}$ and $\wedge$ is the vectorial product in the $3$-manifold $\s^3_{\alpha}$.

Now, let $\Phi:\Sigma\rightarrow \s^3_{\alpha}$ be an immersion of an orientable surface $\Sigma$ and $N$ a unit normal vector field. We define the function $C:\Sigma\rightarrow \r$ by
\[
C=\langle N,\xi\rangle,
\]
where $\langle,\rangle$ is the metric in $\s^3_{\alpha}$ as well as in $\Sigma$. It is clear that $C^2\leq 1$ and that $\{p\in\Sigma\,|\,C^2(p)=1\}=\{p\in\Sigma\,|\,\xi(p)=\pm N(p)\}$ has empty interior because the distribution orthogonal to $\xi$ on $\s^3_{\alpha}$ is not integrable. If $\gamma$ is the $1$-form on $\Sigma$ given by $\gamma(X)=\langle X,\xi\rangle$, then from \eqref{eq:killing-covariant-derivative} it is easy to prove that
\[
d\gamma=-2\sqrt{\alpha}C\,dA,
\]
and hence, if $\Sigma$ is compact, then $\int_{\Sigma}C\,dA=0$.

We can interpret the function $C$ in terms of the immersion $F_{\alpha}\circ\Phi:\Sigma\rightarrow \m^4(1-\alpha)$. In fact, if $\Omega$ is the Kähler $2$-form on $\m^4(1-\alpha)$, then
\[
(F_{\alpha}\circ\Phi)^*(\Omega)=C\,dA.
\]
This means that $C$ is the Kähler function of the immersion $F_{\alpha}\circ\Phi$. Hence, if $\Sigma$ is compact and $\alpha<1$, we have that the degree of $F_{\alpha}\circ\Phi:\Sigma\rightarrow\c\p^2(1-\alpha)$, which is given by
\[
\text{degree}\,(F_\alpha\circ\Phi)=\frac{1}{2\pi}\int_{\Sigma}C\,dA,
\]
is zero.

On the other hand, the Gauss equation of $\Phi$ is given by (see \cite{D})
\begin{equation}\label{eq:gauss-equation}
K=2H^2-\frac{|\sigma|^2}{2}+\alpha+4(1-\alpha)C^2
\end{equation}
where $K$ is the Gauss curvature of $\Sigma$, $H$ is the mean curvature associated to $N$ and $\sigma$ is the second fundamental form of $\Phi$.

\vspace{0.2cm}

Suppose now  that the immersion $\Phi$ has {\em constant mean curvature.}
 We consider on $\Sigma$ the structure of Riemann surface associated to the induced metric and let $z=x+iy$ be a conformal parameter on $\Sigma$. Then, the induced metric is written as $e^{2u}|dz|^2$ and we denote by $\partial_z=(\partial_x-i\partial_y)/2$ and $\partial_{\bar{z}}=(\partial_x+i\partial_y)/2$ the usual operators.

For these surfaces, the Abresch-Rosenberg quadratic differential $\Theta$, defined by
\[
\Theta(z)=\left(\langle\sigma(\partial_z,\partial_z),N\rangle-
\frac{2(1-\alpha)}{H+i\sqrt\alpha}\langle\Phi_z,\xi\rangle^2\right)(dz)^2,
\]
is holomorphic (see \cite{AR},\cite{D}). We denote  $p(z)=\langle \sigma(\partial_z,\partial_z),N\rangle$ and $A(z)=\langle\Phi_z,\xi\rangle$.

\begin{proposition}[\cite{D, FM}] \label{prop:integrability-conditions}
The fundamental data $\{u,C,H,p,A\}$ of a constant mean curvature immersion $\Phi:\Sigma\rightarrow\s^3_{\alpha}$  satisfy the following integrability conditions:
\begin{equation}\label{eq:integrability-conditions}
\left\{
\begin{aligned}
p_{\bar{z}}&=2(1-\alpha)e^{2u}CA,\\
A_{\bar{z}}&=\frac{e^{2u}C}{2}(H+i\sqrt{\alpha}),\\
C_z&=-(H-i\sqrt\alpha)A-2pe^{-2u}\bar{A},\\
|A|^2&=\frac{e^{2u}}{4}(1-C^2).
\end{aligned}
\right.
\end{equation}
Conversely, if $u,C:\Sigma\rightarrow\r$ with $-1\leq C\leq 1$ and $p,A:\Sigma\rightarrow\c$ are functions on a simply connected surface $\Sigma$ satisfying equations \eqref{eq:integrability-conditions}, then there exists a unique, up to congruences, immersion $\Phi:\Sigma\rightarrow\s^3_{\alpha}$ with constant mean curvature $H$ and whose fundamental data are $\{u,C,H,p,A\}$.
\end{proposition}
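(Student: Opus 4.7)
The plan is to prove the two implications separately: the forward direction is a tensorial computation in a conformal frame along $\Phi$, while the converse is a Bonnet-type existence and uniqueness theorem for hypersurfaces of $\s^3_\alpha$. \textbf{Direct implication.} Fix a local conformal parameter $z$ and let $\{e_1,e_2\}=\{e^{-u}\partial_x,e^{-u}\partial_y\}$ be the associated orthonormal tangent frame. Two basic identities will be used throughout: $\partial_z\wedge\partial_{\bar z}=\tfrac{ie^{2u}}{2}N$ (read off from the expansion in $\{e_1,e_2,N\}$) and the Weingarten formula $A_N\partial_z=H\partial_z+2pe^{-2u}\partial_{\bar z}$, equivalent to $\langle\sigma(\partial_z,\partial_z),N\rangle=p$ and $\langle\sigma(\partial_z,\partial_{\bar z}),N\rangle=\tfrac{e^{2u}H}{2}$. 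Differentiating $A=\langle\Phi_z,\xi\rangle$ in $\bar z$ and applying \eqref{eq:killing-covariant-derivative} yields the second equation of \eqref{eq:integrability-conditions}: the ambient derivative of $\Phi_z$ contributes $\tfrac{e^{2u}H}{2}C$ through the conformal formula, while the Killing term reduces to $\langle\Phi_z,\partial_{\bar z}\wedge\xi\rangle=\det(\partial_z,\partial_{\bar z},\xi)=\tfrac{ie^{2u}}{2}C$. Differentiating $C=\langle N,\xi\rangle$ in $z$ splits into a Weingarten piece $-HA-2pe^{-2u}\bar A$ and a Killing piece; writing $\xi=CN+\xi^T$ with $\xi^T=\tfrac{2\bar A}{e^{2u}}\partial_z+\tfrac{2A}{e^{2u}}\partial_{\bar z}$ (forced by $\langle\xi^T,\partial_z\rangle=A$), the Killing piece becomes $+i\sqrt{\alpha}A$, giving the third equation. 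The last equation $|A|^2=\tfrac{e^{2u}}{4}(1-C^2)$ is simply $|\xi^T|^2=1-C^2$ in this decomposition.

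\textbf{The Codazzi equation.} The only equation that requires the full Berger curvature tensor is $p_{\bar z}=2(1-\alpha)e^{2u}CA$. From the Codazzi identity $(\nabla_{\partial_{\bar z}}\sigma)(\partial_z,\partial_z)-(\nabla_{\partial_z}\sigma)(\partial_{\bar z},\partial_z)=(\bar R^\alpha(\partial_{\bar z},\partial_z)\partial_z)^\perp$ together with constancy of $H$, the left-hand side collapses to $p_{\bar z}N$. For the right-hand side I decompose the Riemann tensor of $\s^3_\alpha$ as the round curvature plus the $(1-\alpha)$-correction coming from the Berger deformation $g_\alpha-g=(\alpha-1)g(\cdot,V)g(\cdot,V)$: the round contribution to $\bar R^\alpha(\partial_{\bar z},\partial_z)\partial_z$ is tangent to $\Sigma$ and is killed by $(\cdot)^\perp$, whereas the correction produces a normal component proportional to $(1-\alpha)\langle\partial_z,\partial_{\bar z}\rangle CA$, yielding exactly $2(1-\alpha)e^{2u}CA$.

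\textbf{Converse and main obstacle.} On the simply connected surface $\Sigma$ I assemble a formal adapted frame $(e_1,e_2,N,\hat\xi)$ whose structure equations are dictated by the data $(u,C,H,p,A)$: the tangential part uses the Levi-Civita forms of $e^{2u}|dz|^2$, the normal part encodes $\sigma$ with components $(p,e^{2u}H/2)$, and the Killing identity \eqref{eq:killing-covariant-derivative} prescribes the ambient derivative of $\hat\xi$ once its decomposition is fixed by $C$ (normal component) and $A$ (tangential component, via the fourth equation of \eqref{eq:integrability-conditions}). Integrating this connection to a map $\Phi:\Sigma\to\s^3_\alpha$ via Frobenius requires the zero-curvature Maurer-Cartan condition, which unpacks into the Gauss equation \eqref{eq:gauss-equation}, the Codazzi equation (the first equation of \eqref{eq:integrability-conditions}), and the compatibility of $\hat\xi$ with the frame (the remaining three equations of \eqref{eq:integrability-conditions}). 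Since \eqref{eq:gauss-equation} is a formal consequence of the structure data together with $|A|^2=\tfrac{e^{2u}}{4}(1-C^2)$, the hypotheses supply every compatibility needed, and Frobenius together with simple connectivity produces $\Phi$, unique up to an element of $U(2)=\mathrm{Isom}(\s^3_\alpha)$. The main obstacle is the Codazzi computation of $p_{\bar z}$, which uses the full Riemann tensor of $\s^3_\alpha$ (not merely its Ricci part) and demands careful bookkeeping of the $(1-\alpha)$-correction in the combined conformal and $\xi$-decompositions; for the converse, the subtle point is ensuring that the prescribed $\hat\xi$ is genuinely Killing on the image, which is exactly what forces the last three equations of \eqref{eq:integrability-conditions} to appear jointly, with the identity $(C^2+4|A|^2/e^{2u})_z=0$ serving as an internal consistency check.
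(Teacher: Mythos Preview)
The paper does not give its own proof of this proposition: it is stated with attribution to Daniel \cite{D} and Fern\'andez--Mira \cite{FM} and used as a black box. Your proposal, by contrast, actually supplies the argument, and it follows the same line as those references: the forward direction is obtained by differentiating $A=\langle\Phi_z,\xi\rangle$ and $C=\langle N,\xi\rangle$ in a conformal frame, using the Killing identity \eqref{eq:killing-covariant-derivative} and the Weingarten formula; the first equation is the Codazzi equation, which picks up the $(1-\alpha)$-correction from the Berger curvature tensor; and the converse is a Bonnet-type integration via Frobenius on a simply connected domain, with uniqueness up to the isometry group $U(2)$. Your computations for $A_{\bar z}$, $C_z$, and $|A|^2$ are correct as written.

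One point deserves a little more care. In the converse you assert that the Gauss equation \eqref{eq:gauss-equation} ``is a formal consequence of the structure data together with $|A|^2=\tfrac{e^{2u}}{4}(1-C^2)$''. This is true, and it is precisely the content of the Fern\'andez--Mira observation \cite{FM}, but it is not completely automatic: one must differentiate the fourth equation to recover $A_z=2u_zA+pC$, and then cross-differentiate to see that the Gauss equation follows from the remaining three. If you want the converse to be self-contained rather than a second citation of \cite{FM}, you should write out that short computation; otherwise the Frobenius step is under-justified, since the full Maurer--Cartan system a priori includes Gauss as an independent compatibility condition.
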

Finally, using \eqref{eq:gauss-equation}, the Jacobi operator $L:C^{\infty}(\Sigma)\rightarrow C^{\infty}(\Sigma)$ of the second variation of the area defined in section 1 becomes in
\begin{equation}\label{eq:Jacobi-operator}
\begin{aligned}
L=\Delta+|\sigma|^2+2\alpha+4(1-\alpha)(1-C^2)\\
=\Delta-2K+4(H^2+1)+ 4(1-\alpha)C^2,
\end{aligned}
\end{equation}
being $\Delta$ the Laplacian of $\Sigma$. It is interesting to remark that as $\xi$ is a Killing field on $\s^3_{\alpha}$, then $C=\langle\xi,N\rangle$ is a Jacobi function on $\Sigma$, i.e. $LC=0$.

\section{Stability of constant mean curvature spheres}
In this section we are going to study which constant mean curvature spheres of $\s^3_{\alpha}$ are stables. The constant mean curvature spheres of $\s^3_{\alpha}$ are surfaces invariant under a $1$-parameter group of isometries of $U(2)$ and their existence was announced by Abresch in \cite{A}. More recently, Torralbo \cite{Tr} has described and studied them, showing that:
 \begin{quote}
 \emph{For each real number $H\geq 0$ there exists, up to congruences, a unique immersed constant mean curvature sphere $\mathcal{S}_{\alpha}(H)$ in $\s^3_{\alpha}$ with constant mean curvature $H$}.
\end{quote}
So in each Berger sphere $\s^3_{\alpha}$ there are a $1$-parameter family of constant mean curvature spheres parametrized by the mean curvature $H$, with $H\in[0,\infty[$. All the minimal spheres in this family are nothing but a great equator in $\s^3$, that is, up to congruences
\[
\mathcal{S}_{\alpha}(0)=\{(z,w)\in\s^3\,|\,\Im (w)=0\},\quad\forall\,\alpha>0.
\]
Almost all the spheres $\mathcal{S}_{\alpha}(H)$ are embedded surfaces, but in \cite{Tr} it is proved that when $\alpha$ is very close to zero there are constant mean curvature spheres which are not embedded (see figure~\ref{fig:immersed-cmc-sphere}).

\begin{figure}[htbp]
\centering
\includegraphics[width=0.7\textwidth]{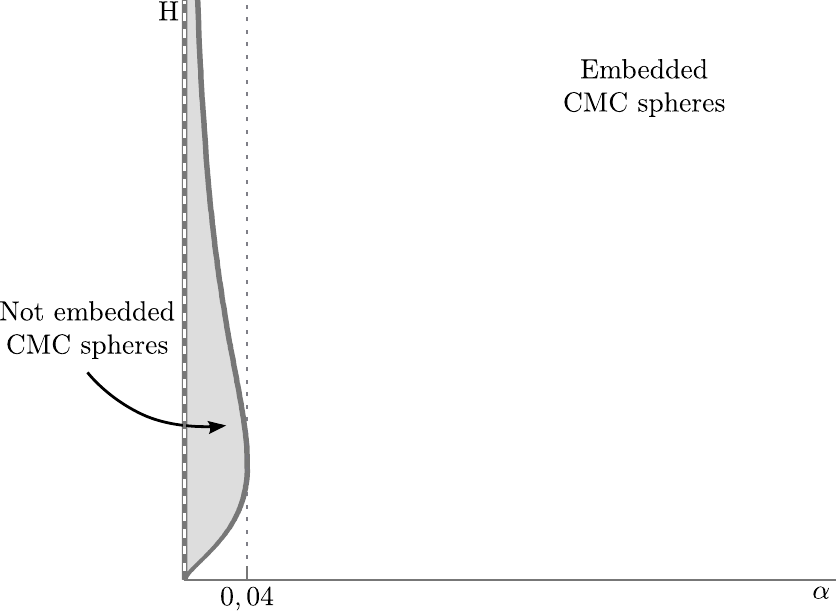}
\caption{Non-embedded region of CMC spheres}\label{fig:immersed-cmc-sphere}
\end{figure}

\begin{theorem} \label{thm:stability-spheres}
There exists $\alpha_0 \in \, ]0,1[$ ($\alpha_0\approx 0.121$) such that:
\begin{enumerate}
    \item for $\alpha \geq \alpha_0$ the spheres $\{\mathcal{S}_\alpha(H)\, | \, H \geq 0\}$ are stable in $\s^3_\alpha$,

    \item for $\alpha<\alpha_0$, there exists $H(\alpha)>0$ such that $\mathcal{S}_\alpha(H)$ is stable in $\s^3_\alpha$ if and only if $H\geq H(\alpha)$. (See figure 2).
\end{enumerate}
\end{theorem}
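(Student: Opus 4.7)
The plan is to exploit the rotational symmetry of the CMC spheres to reduce stability to a Sturm--Liouville analysis. By \cite{Tr}, each $\mathcal{S}_\alpha(H)$ is invariant under a $1$-parameter subgroup $\Gamma\cong S^1$ of $U(2)$, hence a surface of revolution; in adapted coordinates $(s,\theta)$ the fundamental data $u$, $C$, $|A|$, $|p|$ depend only on $s$. Fourier-expanding in $\theta$ diagonalises the Jacobi operator as $L=\bigoplus_{n\in\mathbb{Z}} L_n$, and stability \eqref{eq:def-stability} splits into conditions on each mode.

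For $|n|\geq 1$ the mean-zero constraint is automatic, so stability reduces to $Q\geq 0$ on $\mathcal{H}_n$. Separation of variables turns the $\theta$-Laplacian into a $-n^2/h(s)^2$ multiplier (where $h$ is the length of the $\theta$-orbits), giving $L_n=L_0-n^2/h^2$; hence it suffices to treat $n=\pm 1$. The three Killing fields of $\s^3_\alpha$ complementary to the generator of $\Gamma$ yield Jacobi functions $\langle K,N\rangle$ lying in modes $\pm 1$. A Sturm-type comparison, using the explicit profile description of $\mathcal{S}_\alpha(H)$ from \cite{Tr} to control the nodes of these Jacobi fields, should show they are first eigenfunctions of $L_{\pm 1}$ with eigenvalue zero, settling all higher modes at once.

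The zero mode is the delicate case. The function $C$ is $\Gamma$-invariant, satisfies $LC=0$, and (from $d\gamma=-2\sqrt{\alpha}\,C\,dA$) has zero mean, so $C\in\mathcal{H}_0$ is a mean-zero null vector of $Q$. By Lagrange multipliers the constrained stability on $\mathcal{H}_0$ reduces to a single scalar criterion on the reduced one-dimensional operator $L_0$, expressible via the integrability system of Proposition \ref{prop:integrability-conditions} under the surface-of-revolution ansatz. The universality noted in the introduction (same quadratic form across Berger, $\mathrm{Nil}_3$, and $\mathrm{Sl}(2,\r)$) should emerge naturally here, since the reduced system depends on $(u,C,H,p,A)$ in a way insensitive to the specific ambient signs.

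The main obstacle is locating the threshold $\alpha_0\approx 0.121$. I would first verify that for $\alpha$ close to $1$ the criterion holds for all $H\geq 0$ (recovering Barbosa--do Carmo--Eschenburg in the round case by continuity), then analyse the limit $\alpha\to 0^+$ to detect the onset of instability, consistent with the non-embedded regime of Figure \ref{fig:immersed-cmc-sphere}. Proving monotonicity of the criterion in $H$ for each fixed $\alpha$ would show that the unstable $H$'s form an interval $[0,H(\alpha))$. Because the profile ODEs have no closed-form solutions, identifying $\alpha_0$ and $H(\alpha)$ precisely will probably require a combination of sharp comparison estimates with carefully chosen test functions and numerical verification near the transition.
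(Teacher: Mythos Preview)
Your outline is a reasonable first plan for a rotationally invariant stability problem, but it misses the structural shortcut that makes the paper's proof work, and it leaves the decisive step (the zero mode) essentially open.

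The paper does not Fourier-decompose at all. Instead it uses the vanishing of the Abresch--Rosenberg differential on a sphere to solve the integrability system \eqref{eq:integrability-conditions} \emph{explicitly}: there is a global conformal parameter $z\in\bar{\c}$ in which $C(z)=\tfrac{|z|^2-1}{|z|^2+1}$ for \emph{every} $\mathcal{S}_\alpha(H)$, and closed formulas for $u$, $A$, $p$ follow. With these in hand one checks directly that the Jacobi operator is $L=\Delta+q$ with $q\,e^{2u}=8/(|z|^2+1)^2$ independent of $(\alpha,H)$; hence the quadratic forms $Q$ of all the CMC spheres coincide (this is the ``universality'' you hoped would emerge, but here it is proved in one line rather than via your reduced ODE system). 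Comparison with the round sphere then gives index $1$ and nullity $3$ immediately, bypassing the Sturm comparison you propose for the $n=\pm 1$ modes.

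For the remaining step, where you invoke Lagrange multipliers somewhat vaguely, the paper applies Koiso's criterion (Theorem~\ref{thm:Koiso-stability-criteria}): stability of an index-$1$ surface with $\int f\,dA=0$ for all Jacobi functions is equivalent to $\int v\,dA\geq 0$ for any $v$ with $Lv=1$. Here your belief that ``the profile ODEs have no closed-form solutions'' is the main gap: in the coordinate $w=\log z$ the equation $Lf=1$ becomes a single second-order ODE in $x=\Re w$ which is solved in closed form (an $\arctanh$ when $\alpha<1$, an $\arctan$ when $\alpha>1$), and $\int f\,dA$ is then computed explicitly. The threshold $\alpha_0$ is the zero of
\[
3\sqrt{1-\alpha}+(3\alpha-2)\arctanh\sqrt{1-\alpha}=0,
\]
and the existence and monotonicity of $H(\alpha)$ follow from elementary analysis of this one-variable expression, with no numerical verification or comparison estimates needed.
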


\begin{proof}
If an orientable compact constant mean curvature surface of $\s^3_{\alpha}$ is stable, then the index of the quadratic form $Q$ associated to the Jacobi operator $L$ is one, because (see Corollary 9.6, [PMR]) the index must be positive and if the index is greater than $1$, we can easily find a smooth function $f$ with $\int_{\Sigma}f\,dA=0$ and satisfying $Q(f)<0$, which contradicts the stability of $\Sigma$. Now, among the constant mean curvature surfaces with index $1$, Koiso in \cite{K} (see also \cite{S}) gave an stability criterium that we are going to use to prove the Theorem.

\begin{theorem}[\cite{K, S}]\label{thm:Koiso-stability-criteria}
Let $\Phi:\Sigma\rightarrow\s^3_{\alpha}$ be a CMC immersion of a compact orientable surface $\Sigma$. Suppose that $\Sigma$ has index $1$ and $\int_{\Sigma}f\,dA=0$ for any Jacobi function $f$, i.e., any function satisfying $Lf=0$. Then there exists a uniquely determined smooth function $v\in\ker{L}^{\perp}$ satisfying $Lv=1$. Moreover $\Sigma$ is stable if and only if $\int_{\Sigma}v\,dA\geq0$.
\end{theorem}
In order to use Koiso's Theorem, we start by proving the following result:

\begin{quote}
{\em The quadratic forms associated to the Jacobi operators of any constant mean curvature sphere $\mathcal{S}_{\alpha}(H)$ are the same. In particular their index is one, their nullity is three and $\int_{\Sigma} f\,dA=0$ for any Jacobi function. }
\end{quote}

Let $\Phi:\Sigma\rightarrow\s^3_{\alpha}$ be a constant mean curvature immersion of a sphere. Then the Abresch-Rosenberg holomorphic differential $\Theta$ vanishes identically and so from \eqref{eq:integrability-conditions} we obtain that
\begin{equation}\label{eq:gradient-laplacian-C}
\begin{split}
C_z &=-\frac{A}{H+i\sqrt\alpha}(H^2+1-(1-\alpha)C^2),\\
C_{z\bar{z}} &=-\frac{e^{2u}C}{2(H^2+\alpha)}(H^2+1-(1-\alpha)C^2)^2.
\end{split}
\end{equation}
As $H^2+1-(1-\alpha)C^2>0$, the only critical points of $C$ are those where $A$ vanishes, i.e., taking into account \eqref{eq:integrability-conditions}, those with $C^2=1$. Moreover, using \eqref{eq:integrability-conditions} again, the determinant of the Hessian of $C$ in a critical point is $(H^2+\alpha)^2>0$. So $C$ is a Morse function on $\Sigma$ and so it has only two critical points $p,q$ which are the absolute maximum and minimum of $C$.

Using one more time \eqref{eq:integrability-conditions} it is easy to check that
$\log\sqrt{\frac{1+C}{1-C}}$ is a harmonic function with singularities at $p$, $q$ and without critical points. Hence there exists a global conformal parameter $z$ on the sphere $\Sigma$ such that $(\log\sqrt{\frac{1+C}{1-C}})(z)=\log|z|$, and so the function $C$ of any constant mean curvature immersion of $\Sigma$ is given by $$C(z)=\frac{|z|^2-1}{|z|^2+1},\quad z\in\bar{\c}.$$

Conversely, let $C:\bar{\c}\rightarrow\r$ be the function $C(z)=\frac{|z|^2-1}{|z|^2+1}$, $H$ a nonnegative real number and $\s^3_\alpha$ a Berger sphere. We define the functions $A,p:\bar{\c}\rightarrow\c$ and $u:\bar{\c}\rightarrow\r$ by
\[
\begin{split}
A(z)&=-\frac{2(H+i\sqrt\alpha)\bar{z}}{4(1-\alpha)|z|^2+(H^2+\alpha)(|z|^2+1)^2},\\
p(z)&=\frac{2(1-\alpha)}{H+i\sqrt\alpha}A^2(z),\\
e^{2u(z)}&=\frac{4(|z|^2+1)^2(H^2+\alpha)}{(4(1-\alpha)|z|^2+(H^2+\alpha)(|z|^2+1)^2)^2}.
\end{split}
\]
Then it is easy to check that these functions satisfy the equations \eqref{eq:integrability-conditions} and so Proposition \ref{prop:integrability-conditions} says that there exists a conformal immersion $\Phi:\bar{\c}\rightarrow\s^3_{\alpha}$ with constant mean curvature $H$. We note that for any $H$ and for any Berger sphere $\s^3_{\alpha}$ the immersions $\Phi's$ have associated the same function $C$.

Also, from \eqref{eq:integrability-conditions} and \eqref{eq:Jacobi-operator} it is straightforward to check that the Jacobi operator $L$ of the immersion $\Phi$ is given by $$L=\Delta+q,\quad\quad q(z)=\frac{8e^{-2u(z)}}{(|z|^2+1)^2}.$$
Suppose that $\hat{\Phi}:\bar{\c}\rightarrow\s^3_{\hat{\alpha}}$ is an immersion with constant mean curvature $\hat{H}$. Then its Jacobi operator is
$\hat{L}=\hat{\Delta}+\hat{q}$, with $\hat{q}(z)=\frac{8e^{-2\hat{u}(z)}}{(|z|^2+1)^2}$. As $\hat{q}e^{2\hat{u}}=qe^{2u}$, the quadratics forms $\hat{Q}$ and $Q$ of $\hat{\Phi}$ and $\Phi$ satisfy
\[
\begin{split}
\hat{Q}(f) &= \int_{\bar{\c}}-f(\hat{\Delta}f+\hat{q}f)e^{2\hat{u}}dz= \int_{\bar{\c}}-f(e^{2\hat{u}}\hat{\Delta}\,f+qe^{2u}f)\,dz\\
&=\int_{\bar{\c}}-f(e^{2u}\Delta\,f+qe^{2u}f)\,dz=Q(f),
\end{split}
\]
for any smooth function $f:\bar{\c}\rightarrow\r$.
Hence all the constant mean curvature spheres have the same quadratic form of the second variation. In particular, all have the same index and nullity. Also it is clear that the above property keeps for constant mean curvature spheres of the round sphere $\s^3_{1}$ whose index are $1$ and their nullity are $3$.

To finish the proof of this first step, as the Berger spheres have $4$ linearly independent Killing fields and the constant mean curvature spheres are invariant under a $1$-parameter group of isometries, then all the Jacobi functions come from Killing fields of the ambient space, i.e., if $Lf=0$ then there exists a Killing field $V$ on the ambient space such that $f=\langle V,N\rangle$. Now it is clear that if $V^{\top}$ denotes the tangential component of $V$, then $\text{div}\,V^{\top}=2Hf$, and hence $\int_{\Sigma}f\,dA=0$ if the sphere is not minimal. In the minimal case is easy to prove the same property. Hence the proof of our claim has finished.

Now from Koiso's result, there exists a unique function $v\in(\ker\,L)^{\perp}$ with $Lv=1$. It is clear that another function $f$ with $Lf=1$ is given by $f=v+f_0$ with $f_0\in\ker\,L$ and hence $\int_{\Sigma}f\,dA=\int_{\Sigma}v\,dA$. So for the stability criterium we can use any function $f$ with $Lf=1$.

In order to get a solution of $Lf=1$, it is convenient to reparametrize the spheres by $e^w=z$. Then if $w=x+iy$, the function $C$ becomes in $C(x,y)=\tanh x$ and the induced metric in $e^{2v}|dw|^2$, where $v(x,y)=v(x)$. A simple computation shows that the equation $Lf=1$ becomes in
\[
f''(x)+\frac{2}{\cosh^2x}f(x)=\frac{(H^2+\alpha)\cosh^2x}{\left((H^2+\alpha)\cosh^2x+1-\alpha)\right)^2}.
\]
It is straightforward to check that if $h(x)=\frac{\sqrt{|1-\alpha|}}{\sqrt{H^2+1}}\tanh\,x$, then
\[
	f(x) =
	 \begin{cases}
	  \displaystyle\frac{1}{2(H^2+1)}\left[1-h(x)\arctanh( h(x))\right], & \text{when } \alpha < 1,\\

	\displaystyle\frac{1}{2(H^2+1)}\left[1+h(x)\arctan( h(x))\right] & \text{when } \alpha > 1,
	 \end{cases}
	\]
is a solution of $Lf=1$.

Now
\[
\int_{\bar{\c}}f\,dA=
\begin{cases}
    \frac{\pi}{2(H^2 + 1)^2} \left(3 + \frac{(H^2 + 3\alpha - 2)}{\sqrt{H^2+1}\sqrt{1-\alpha}} \arctanh \frac{\sqrt{1-\alpha}}{\sqrt{H^2+1}} \right), & \text{ when } \alpha < 1 \\
    \frac{\pi}{2(H^2 + 1)^2} \left(3 + \frac{(H^2 + 3\alpha - 2)}{\sqrt{H^2+1}\sqrt{\alpha-1}} \arctan \frac{\sqrt{\alpha-1}}{\sqrt{H^2+1}} \right), & \text{ when } \alpha > 1. \\
\end{cases}
\]
When $\alpha > 1$, the above integral is always positive and hence the corresponding constant mean curvature spheres are stable. On the other hand when $\alpha < 1$, the above integral is positive in the region showed in the figure~\ref{fig:stable-CMC-spheres}, where the painting curve has implicit equation $\int f\, dA = 0$, i.e.
\[
    3\sqrt{H^2 + 1}\sqrt{1-\alpha} + (H^2 + 3\alpha - 2)\arctanh\left( \frac{\sqrt{1-\alpha}}{\sqrt{H^2 + 1}} \right) = 0.
\]
The special value $\alpha_0$ is the solution of the above equation when $H = 0$, i.e., $\arctanh(\sqrt{1-\alpha_0}) = \dfrac{3\sqrt{1-\alpha_0}}{2 - 3\alpha_0}$. Using again Koiso's theorem we finish the proof of the Theorem.
\end{proof}
\begin{figure}[htbp]
\centering
\includegraphics[width=0.7\textwidth]{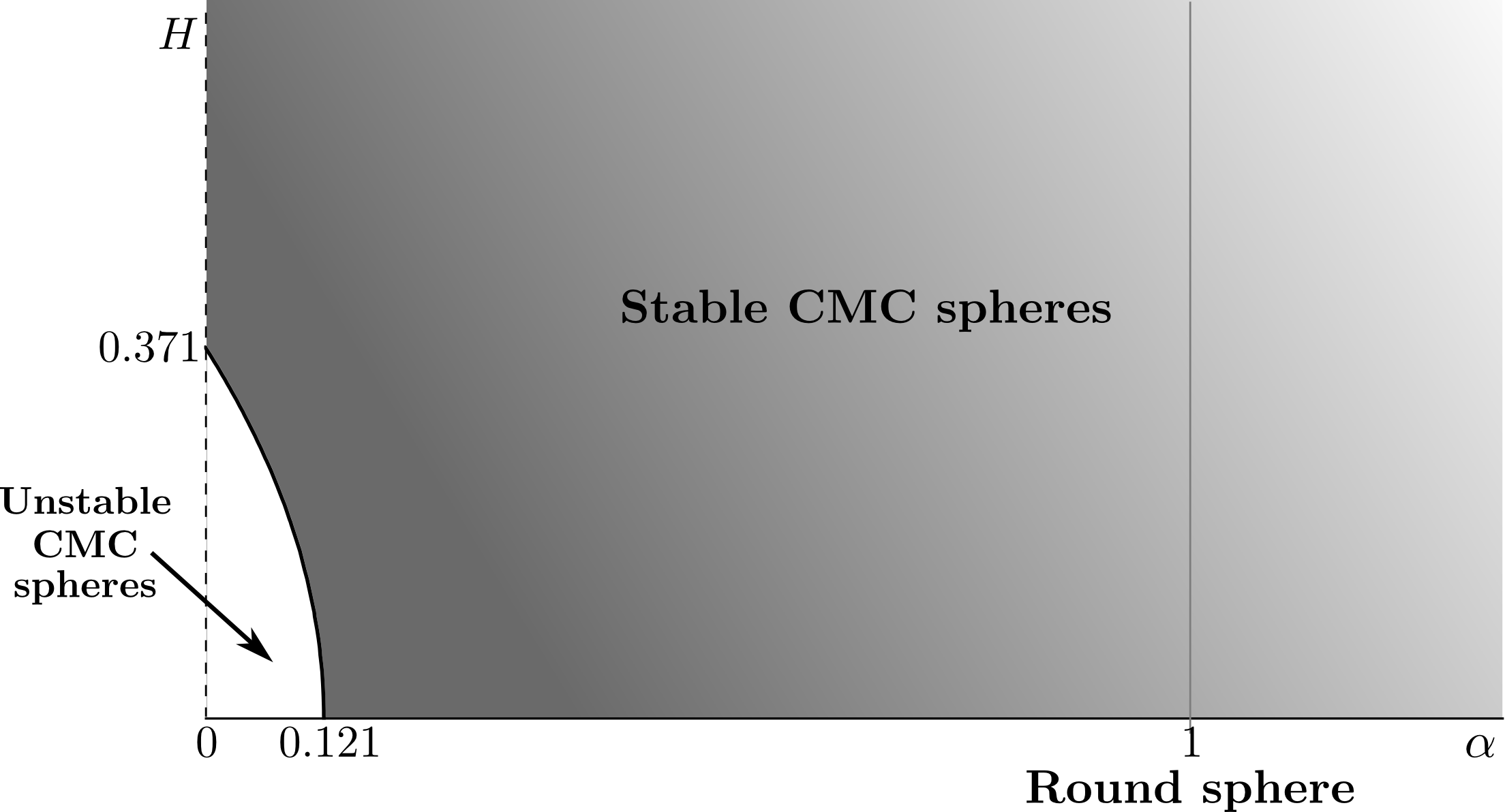}
\caption{Stable CMC spheres.\label{fig:stable-CMC-spheres}}
\end{figure}

The proof of Theorem $2$ can be adapted to study the stability of the constant mean curvature spheres of the Heisenberg group and the homogeneous Riemannian manifolds $(\mathrm{Sl}(2,\r),g_\beta)$. In fact following the same reasoning it is easy to see that these constant mean curvature spheres have also the same quadratic form $Q$ than the corresponding spheres of $\s^3_{\alpha}$, and hence their index are one and their nullity are three. Now defining a function $f$ with $Lf=1$ like in the case of the Berger spheres, it can be proved that $\int_{\Sigma}f\geq 0$ for any constant mean curvature sphere of $\mathrm{Nil}_3$ or $(\mathrm{Sl}(2,\r),g_\beta)$. Hence we obtain the following result:
\begin{theorem}\label{thm:stability-spheres-Nil3-Sl}
For each $H>0$ the unique embedded constant mean curvature sphere in the Heisenberg group $\mathrm{Nil}_3$ with constant mean curvature $H$ is stable.

For each $H> 1$ the unique immersed constant mean curvature sphere in the homogeneous Riemannian $3$-manifold $(\mathrm{Sl}(2,\r),g_\beta)$ with constant mean curvature $H$ is stable.
\end{theorem}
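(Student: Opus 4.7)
The plan is to mirror step by step the proof of Theorem~\ref{thm:stability-spheres}, applying Koiso's criterion (Theorem~\ref{thm:Koiso-stability-criteria}) to each CMC sphere of the two ambient spaces. Both $\mathrm{Nil}_3$ and $(\mathrm{Sl}(2,\r),g_\beta)$ are Riemannian submersions over a $2$-dimensional simply connected space form (of base curvature $\kappa=0$ and $\kappa=-4$ respectively) with totally geodesic fibres, and in both the Abresch-Rosenberg holomorphic quadratic differential $\Theta$ exists. The integrability conditions \eqref{eq:integrability-conditions} extend verbatim with $\sqrt{\alpha}$ replaced by the bundle curvature $\tau$ and $(1-\alpha)$ replaced by $\tau^{2}-\kappa/4$. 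On any CMC sphere $\Theta$ vanishes, so the same Morse-theoretic analysis of $C$ yields a global conformal parameter $z\in\bar{\c}$ with $C(z)=\tfrac{|z|^{2}-1}{|z|^{2}+1}$, and the companion explicit formulas for $A$, $p$ and $e^{2u}$ reduce the Jacobi potential to the universal expression $q\,e^{2u}=\tfrac{8}{(|z|^{2}+1)^{2}}$, independent of $H$, $\kappa$ and $\tau$. Hence the quadratic form $Q$ is literally the same as in the Berger case, so these spheres have index~$1$ and nullity~$3$. Moreover, since both ambient spaces have a $4$-dimensional isometry group, every Jacobi function is of the form $\langle V,N\rangle$ for an ambient Killing field $V$, and the identity $\mathrm{div}\,V^{\top}=2Hf$ gives $\int_{\Sigma}f\,dA=0$ whenever $H\neq 0$; the hypotheses of Koiso's criterion are therefore met.

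Next I would produce an explicit $f$ with $Lf=1$ using the rotational symmetry: after the reparametrization $e^{w}=z$, $w=x+iy$, one has $C(x,y)=\tanh x$, the induced metric becomes $e^{2v(x)}|dw|^{2}$, and $Lf=1$ reduces to the ODE
\[
f''(x)+\frac{2}{\cosh^{2}x}\,f(x)=R(x;H,\kappa,\tau),
\]
whose right-hand side is the Berger expression with $\alpha$ and $1-\alpha$ replaced by $\tau^{2}$ and $\tau^{2}-\kappa/4$ respectively. Since $\kappa\leq 0$ in both cases, setting $h(x)=\sqrt{(\tau^{2}-\kappa/4)/(H^{2}+\kappa/4)}\,\tanh x$ (and using $H>1$ in the $\mathrm{Sl}(2,\r)$ case to guarantee $H^{2}+\kappa/4=H^{2}-1>0$) produces a closed-form solution of type
\[
f(x)=\frac{1}{2(H^{2}+\kappa/4)}\bigl[1+h(x)\arctan h(x)\bigr],
\]
in analogy with the $\alpha>1$ branch of the Berger formula.

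The main obstacle is the final step: proving $\int_{\Sigma}f\,dA\geq 0$. Integrating the above expression against the rotational area element yields, as in the Berger computation, a closed-form identity of the shape $\frac{\pi}{2(H^{2}+\kappa/4)^{2}}\bigl(A+B\arctan C\bigr)$ with $A$, $B$, $C$ explicit in $H$, $\kappa$, $\tau$. For $\mathrm{Nil}_{3}$ ($\kappa=0$, $H>0$) every factor is manifestly positive, so the integral is strictly positive. For $(\mathrm{Sl}(2,\r),g_{\beta})$ ($\kappa=-4$) the admissibility bound $H>1$, which is also the threshold for the very existence of CMC spheres, is precisely what forces $H^{2}+\kappa/4=H^{2}-1>0$ and keeps the arctan argument tame; a monotonicity analysis of the integral as a function of $H\in(1,\infty)$, together with its Euclidean-type behaviour as $H\to\infty$ (which recovers the classical Barbosa-Do~Carmo positivity) and a control of the boundary contribution as $H\to 1^{+}$, will establish the desired inequality. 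An application of Koiso's theorem then completes the proof.
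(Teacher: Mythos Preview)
Your proposal is correct and follows exactly the approach the paper sketches: the paper's own ``proof'' of Theorem~\ref{thm:stability-spheres-Nil3-Sl} is only the paragraph preceding the statement, which says to adapt the proof of Theorem~\ref{thm:stability-spheres}, observe that the quadratic form $Q$ is universal (hence index~$1$ and nullity~$3$), solve $Lf=1$ as before, and check $\int_\Sigma f\,dA\geq 0$. You have supplied the details the paper omits.

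One simplification: the ``main obstacle'' you flag for $(\mathrm{Sl}(2,\r),g_\beta)$ is not there. Both $\mathrm{Nil}_3$ and $(\mathrm{Sl}(2,\r),g_\beta)$ fall in the analogue of the $\alpha>1$ branch of the Berger computation (since the replacement for $1-\alpha$, namely $\kappa/4-\tau^2$, is negative when $\kappa\leq 0$), and in that branch the paper simply notes that the closed-form integral
\[
\frac{\pi}{2(H^2+1)^2}\left(3+\frac{H^2+3\alpha-2}{\sqrt{H^2+1}\sqrt{\alpha-1}}\arctan\frac{\sqrt{\alpha-1}}{\sqrt{H^2+1}}\right)
\]
is a sum of positive terms. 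After your substitutions the same thing happens: the constant $3$ stays, the coefficient $(H^2+\kappa/4)+3(\tau^2-\kappa/4)$ is positive for all $H>0$ in $\mathrm{Nil}_3$ and all $H>1$ in $(\mathrm{Sl}(2,\r),g_\beta)$, and the $\arctan$ factor is positive. No monotonicity or boundary analysis as $H\to 1^+$ is needed.
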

Torralbo \cite{Tr} proved that when $\beta$ is very close to $0$ or $H$ is very close to $1$ there are constant mean curvature spheres which are not embedded.

\section{Examples of stable constant mean curvature tori}
In \cite{TU} the authors classified the compact {\it flat} surfaces in $\s^3_{\alpha}$, proving that they are Hopf tori, i.e., inverse images of closed curved of $\s^2(1/2)$ by the Hopf fibration $\Pi:\s^3_{\alpha}\rightarrow\s^2(1/2)$ (see section 2). Since the Killing vector field $\xi$ is tangent to a Hopf torus, these surfaces have $C=0$. Now it is easy to check that such Hopf torus has constant mean curvature $H$ if and only if the closed curve of $\s^2(1/2)$ has constant curvature $2H$. As, up to congruences of $\s^2(1/2)$, these curves are intersection of $\s^2(1/2)$ with horizontal planes of $\r^3$, we get the following result.

\begin{proposition}\label{prop:CMC-flat-torus}
For each $H\geq 0$ there exists, up to congruences, a unique constant mean curvature embedded flat torus $\mathcal{ T}_{\alpha}(H)$ in $\s^3_\alpha$ with constant mean curvature $H$. Such torus is defined by
\[
    \mathcal{T}_{\alpha}(H)=\{(z, w) \in \s^3 \,|\, |z|^2 = r_1^2,\,|w|^2=r_2^2\}=\s^1(r_1)\times\s^1(r_2)
\]
with
\[
\quad r_1^2 =\frac{1}{2} + \frac{H}{2\sqrt{1+H^2}}\quad \text{and}\quad r_2^2 =\frac{1}{2} - \frac{H}{2\sqrt{1+H^2}}.
\]
\end{proposition}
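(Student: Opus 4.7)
The plan is to follow exactly the reduction outlined immediately before the statement, using the classification of compact flat surfaces in $\s^3_{\alpha}$ from \cite{TU}. By that classification, every such surface is a Hopf torus $\mathcal{T}=\Pi^{-1}(\gamma)$ for a regular closed curve $\gamma$ on $\s^2(1/2)$. Because $\xi$ is everywhere tangent to $\mathcal{T}$, the function $C=\langle N,\xi\rangle$ vanishes on $\mathcal{T}$, and a routine submersion computation (using that $\Pi\colon\s^3_{\alpha}\to\s^2(1/2)$ is Riemannian with totally geodesic fibers and bundle curvature $\sqrt{\alpha}$) shows that the mean curvature of $\mathcal{T}$ equals one half of the geodesic curvature of $\gamma$. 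Thus $\mathcal{T}$ has constant mean curvature $H$ if and only if $\gamma$ has constant geodesic curvature $2H$ on $\s^2(1/2)$.

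Next, I would note that closed curves of constant geodesic curvature on $\s^2(1/2)$ are, up to isometries of $\s^2(1/2)$, precisely the small circles obtained by slicing $\s^2(1/2)\subset\r^3$ with a horizontal plane $\{x_3=h\}$, $h\in(-1/2,1/2)$. For the Hopf map $\Pi(z,w)=(z\bar w,(|z|^2-|w|^2)/2)$, the condition $(|z|^2-|w|^2)/2=h$ together with $|z|^2+|w|^2=1$ yields
\[
\Pi^{-1}(\gamma)=\{(z,w)\in\s^3\,|\,|z|^2=\tfrac12+h,\ |w|^2=\tfrac12-h\}=\s^1(r_1)\times\s^1(r_2),
\]
with $r_1^2=\tfrac12+h$ and $r_2^2=\tfrac12-h$, so the torus already has the announced product structure.

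The remaining step is to compute $h$ in terms of $H$. On a sphere of radius $R$ the small circle cut by $\{x_3=h\}$ has geodesic curvature $h/(R\sqrt{R^2-h^2})$, so on $\s^2(1/2)$ this equals $2h/\sqrt{1/4-h^2}$. Setting $2h/\sqrt{1/4-h^2}=2H$ and solving gives $h=H/(2\sqrt{1+H^2})$, from which the stated formulas for $r_1^2$ and $r_2^2$ follow immediately. Uniqueness up to congruence of $\s^3_{\alpha}$ follows from the uniqueness of the base circle up to $O(3)$, since each isometry of $\s^2(1/2)$ lifts to an isometry of $\s^3_{\alpha}$ preserving the Hopf fibration.

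The only computation requiring a small amount of care is the geodesic curvature of a small circle of $\s^2(1/2)$ as a function of its Euclidean height, but this is a classical formula; everything else is a bookkeeping exercise using ingredients already set up in Section 2 and in the paragraph preceding the statement. The main (mild) obstacle is simply matching the height parameter consistently with the prescribed mean curvature.
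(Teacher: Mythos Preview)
Your proof is correct and follows exactly the approach the paper sketches in the paragraph preceding the proposition: reduce to Hopf tori via \cite{TU}, identify the mean curvature of a Hopf torus with half the geodesic curvature of the base curve, and classify constant-curvature circles on $\s^2(1/2)$ as horizontal slices. You actually supply more detail than the paper does (the explicit computation of the height $h$ from $H$ and the resulting radii), but the strategy is identical.
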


\begin{remark}
We note that the torus $\s^1(r_1)\times\s^1(r_2)$ with $\quad r_1^2 =\frac{1}{2} + \frac{H}{2\sqrt{1+H^2}}$ has constant mean curvature $H$ with respect to any Berger sphere $\s^3_{\alpha},\,\alpha>0$. In particular, for $H=0$ all the tori $\mathcal{T}_{\alpha}(0)$ are the Clifford torus in $\s^3$,
\[
\mathcal{ T}_{\alpha}(0)=\{(z,w)\in\s^3\,|\,|z|^2=|w|^2=\frac{1}{2}\}.
\]
\end{remark}

Now we are going to determine which of these flat tori are stable as constant mean curvature surfaces. As $K=0$ and $C=0$, from \eqref{eq:Jacobi-operator} it follows that the Jacobi operator of the torus $\mathcal{T}_{\alpha}(H)$ is given by
\[
L=\Delta+4(H^2+1).
\]
Hence we need to compute the first non-null eigenvalue of the Laplacian $\Delta$ of the torus $\mathcal{T}_{\alpha}(H)$. To do that, let $\Phi:\r^2\rightarrow\s^3_{\alpha}$ the immersion $\Phi(t,s)=(r_1e^{it},r_2e^{is})$. Then $\Phi(\r^2)=\mathcal{T}_{\alpha}(H)$, where the relation between $r_i$ and $H$ is given in Proposition \ref{prop:CMC-flat-torus}. The induced metric is given by $g=(g_{ij})$ with $g_{ii}=r_i^2(1-(1-\alpha)r_i^2)$ for $i=1,2$ and $g_{12}=-r_1^2r_2^2(1-\alpha)$. Therefore intrinsically the torus $\mathcal{T}_{\alpha}(H)$ is given by $T=\r^2/\Lambda$, $\Lambda$ being the lattice in $\r^2$ generated by the vectors $2\pi v_1$ and $2\pi v_2$ where
\[
v_1=(r_1\sqrt{1-(1-\alpha)r_1^2},0),\quad v_2=\frac{r_2}{\sqrt{1-(1-\alpha)r_1^2}}(-r_1r_2(1-\alpha),\sqrt{\alpha}).
\]
Now, the dual lattice is generated by
\[
v_1^*=\frac{1}{\sqrt{1-(1-\alpha)r_1^2}}(\frac{1}{r_1},\frac{r_2(1-\alpha)}{\sqrt\alpha}),\quad v_2^*=(0,\frac{\sqrt{1-(1-\alpha)r_1^2}}{r_2\sqrt\alpha}),
\]
and hence the spectrum of the Laplacian of $\mathcal{T}_\alpha(H)$ is given by $\{|mv_1^*+nv_2^*|^2\,|\,m,n\in\z\}$. Taking into account that $r_1\geq r_2$, it is not difficult to check that the first non-null eigenvalue $\lambda_1$ of the Laplacian of $\mathcal{T}_\alpha(H)$ is given by:
\[
\lambda_1=
\begin{cases}\frac{2\sqrt{H^2+1}}{H+\sqrt{H^2+1}}+\frac{1-\alpha}{\alpha}\quad\text{when}
\begin{cases}
\alpha>1/3\quad\text{or}\\
\alpha\leq\,1/3\quad\text{and}\,\quad  H>\frac{1-3\alpha}{2\sqrt{\alpha(1-2\alpha)}}
\end{cases}\\
4(H^2+1)\quad \text{when}\quad \alpha\leq\,1/3\quad\text{and}\quad H\leq\frac{1-3\alpha}{2\sqrt{\alpha(1-2\alpha)}}.
\end{cases}
\]
So, the expression of the Jacobi operator of $\mathcal{T}_\alpha(H)$ gives the following result.

\begin{proposition}~ \label{prop:stability-flat-tori}
\begin{enumerate}
	\item For each $\alpha>1/3$ the constant mean curvature tori $\{\mathcal{T}_\alpha(H)\,|\,H\geq 0\}$ are unstable in $\s^3_\alpha$.
	\item For each $\alpha$ such that $\alpha\leq 1/3$ the constant mean curvature torus ${ \mathcal{T}}_\alpha(H)$ is stable if and only if $H\leq\frac{1-3\alpha}{2\sqrt{\alpha(1-2\alpha)}}$. (See figure 3).
\end{enumerate}
\end{proposition}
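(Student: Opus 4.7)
The Jacobi operator on $\mathcal{T}_\alpha(H)$ simplifies, by the second form of \eqref{eq:Jacobi-operator} together with $K=0$ and $C=0$, to $L = \Delta + 4(H^2+1)$, a Schr\"odinger operator with constant potential. My plan is to reduce stability to a spectral inequality for $\Delta$ and then read off the two cases from the piecewise $\lambda_1$-formula recorded just above the statement.

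\textbf{Step 1 (stability as a spectral condition).} First I would note that if $\int_{\Sigma} f\,dA = 0$ then $f$ is $L^2$-orthogonal to the constants, i.e.\ to $\ker\Delta$, so expanding in a Laplacian eigenbasis yields $\int |\nabla f|^2\,dA \geq \lambda_1 \int f^2\,dA$. Hence
\[
Q(f) = \int_{\Sigma} |\nabla f|^2\,dA - 4(H^2+1)\int_{\Sigma} f^2\,dA \geq \bigl(\lambda_1 - 4(H^2+1)\bigr)\int_{\Sigma} f^2\,dA.
\]
Any $\lambda_1$-eigenfunction realises equality and is itself mean-zero (being orthogonal to the $\lambda_0=0$ constants), so $\mathcal{T}_\alpha(H)$ is stable if and only if $\lambda_1 \geq 4(H^2+1)$.

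\textbf{Step 2 (comparing $\lambda_1$ with $4(H^2+1)$).} Write $H_0 := \frac{1-3\alpha}{2\sqrt{\alpha(1-2\alpha)}}$. In the regime $\alpha \leq 1/3$, $H \leq H_0$ the formula gives $\lambda_1 = 4(H^2+1)$ exactly, so the spectral inequality holds with equality and the torus is stable. In the complementary regime I would rationalise
\[
\frac{2\sqrt{H^2+1}}{H+\sqrt{H^2+1}} = 2(H^2+1) - 2H\sqrt{H^2+1},
\]
obtaining
\[
4(H^2+1) - \lambda_1 = 2(H^2+1) + 2H\sqrt{H^2+1} - \frac{1-\alpha}{\alpha}.
\]
The right-hand side is strictly increasing in $H\geq 0$, equals $\frac{3\alpha-1}{\alpha}$ at $H=0$, and vanishes precisely at $H=H_0$ when $\alpha \leq 1/3$ (a one-line check using $H_0^2+1 = (1-\alpha)^2/(4\alpha(1-2\alpha))$). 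Therefore this difference is strictly positive for every $H\geq 0$ when $\alpha > 1/3$, which yields part~(1); and it is strictly positive for $H > H_0$ when $\alpha \leq 1/3$, which yields the unstable direction of part~(2). Combining the two sub-cases completes the proof.

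\textbf{Main obstacle.} The spectral reduction is immediate and the final comparison is elementary algebra. The one piece of genuine work has already been done just above the statement: identifying which element $m v_1^* + n v_2^*$ of the dual lattice realises the minimum as $H$ varies, and thereby deriving the piecewise formula for $\lambda_1$ with its threshold $H_0$. Once that computation is granted, the proposition is essentially the case-by-case translation of $\lambda_1 \geq 4(H^2+1)$.
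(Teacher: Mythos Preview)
Your proof is correct and follows exactly the approach of the paper: reduce $L=\Delta+4(H^2+1)$ so that stability becomes the inequality $\lambda_1\geq 4(H^2+1)$, then read off both parts from the piecewise formula for $\lambda_1$ derived just above the statement. The paper compresses your Steps~1--2 into the single sentence ``So, the expression of the Jacobi operator of $\mathcal{T}_\alpha(H)$ gives the following result''; your version simply makes explicit the elementary algebra (the rationalisation and the monotonicity in $H$) that the paper leaves to the reader.
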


\begin{figure}[h]
\centering
\includegraphics[width=0.7\textwidth]{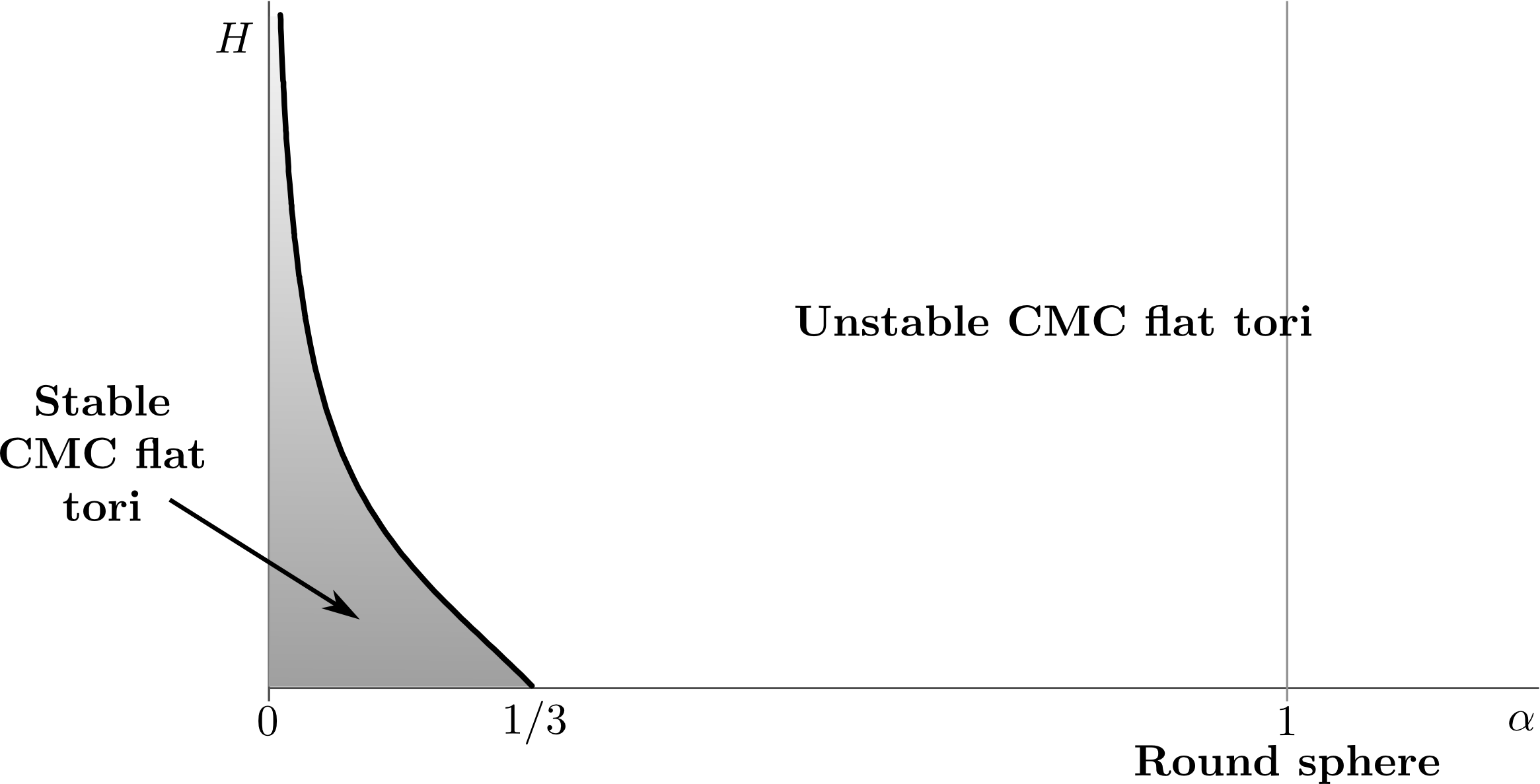}
\caption{Stable CMC flat tori.\label{fig:stable-flat-tori}}
\end{figure}

\begin{remark}
For each Berger sphere $\s^3_\alpha$ with $\alpha\leq 1/3$, the Clifford torus $\mathcal{T}_\alpha(0)$ is stable and in $\s^3_{1/3}$ the Clifford torus is the only stable constant mean curvature flat torus.
\end{remark}

\section{Stability of compact surfaces}
In this section we are going to study compact stable constant mean curvature surfaces of the Berger spheres $\s^3_\alpha$. We start showing the known results. First (see section 2), a Berger sphere has non-negative Ricci curvature if and only if $\alpha\leq 2$. Ros \cite{R2} bounded the genus of a compact stable constant mean curvature surface of a $3$-dimensional Riemannian manifold with non-negative Ricci curvature. This fact joint with the classical result of Barbosa, DoCarmo and Eschenburg become in the following result:

\begin{theorem}[\cite{BCE},\cite{R2}]~ \label{thm:compact-stable-CMC-surfaces-genus<=3}
\begin{enumerate}
\item The only orientable compact stable constant mean curvature surfaces of the round sphere $\s^3$ ($\alpha=1)$ are the umbilical spheres.
\item If $\Sigma$ is an orientable compact stable constant mean curvature surface of the Berger sphere $\s^3_{\alpha}$ with $\alpha\leq 2$ then the genus $g$ of $\Sigma$ is $g\leq 3$.
\end{enumerate}
\end{theorem}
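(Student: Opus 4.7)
The plan is to prove the two assertions by feeding the stability inequality $Q(f)\geq 0$ with suitably chosen test families, extracting from each a pointwise geometric or topological consequence.

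For part (1), I would follow the Barbosa--DoCarmo--Eschenburg approach. Use the isometric embedding $\s^3\hookrightarrow\r^4$ and, for each fixed $a\in\r^4$, let $u_a=\langle\Phi,a\rangle$ and $v_a=\langle N,a\rangle$ be the restrictions to $\Sigma$. A direct computation, exploiting $\Delta_{\s^3}\langle\cdot,a\rangle=-3\langle\cdot,a\rangle$ and the Weingarten formula, gives
\[
\Delta u_a=-2u_a-2Hv_a,\qquad Lv_a=2Hu_a.
\]
Integration by parts then yields the Minkowski-type identity $\int_\Sigma(Hu_a+v_a)\,dA=0$, so $f_a:=Hu_a+v_a$ is an admissible test function. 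Computing $Q(f_a)$, summing over $a$ running through an orthonormal basis of $\r^4$, and using the pointwise identities $\sum_a u_a^2\equiv 1$ and $\sum_a v_a^2\equiv 1$, the sum telescopes to $\int_\Sigma(|\sigma|^2-2H^2)\,dA\leq 0$. Since $|\sigma|^2\geq 2H^2$ pointwise (Cauchy--Schwarz applied to the trace-free part of $\sigma$), equality must hold everywhere, $\Sigma$ is totally umbilical, and hence a geodesic sphere.

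For part (2), I would use Ros's conformal-map technique. If $\Sigma$ has genus $g$, a classical existence result (gonality/branched-covering bound) provides a non-constant holomorphic map $\phi:\Sigma\to\s^2\subset\r^3$ of degree $d\leq\lfloor(g+3)/2\rfloor$. By composing with an appropriate M\"obius transformation of $\s^2$, chosen via a Hersch--Li--Yau-type balancing argument, one arranges $\int_\Sigma\phi_i\,dA=0$ for each coordinate $\phi_i$, so the three $\phi_i$'s are admissible. Testing $Q$ with each and summing, the identity $\phi_1^2+\phi_2^2+\phi_3^2\equiv 1$ together with the conformal energy formula $\sum_i|\nabla\phi_i|^2=|d\phi|^2$ and $\int_\Sigma|d\phi|^2\,dA=8\pi d$ yields
\[
\int_\Sigma(|\sigma|^2+\mathrm{Ric}(N))\,dA\leq 8\pi d.
\]
Combining this with the Gauss equation \eqref{eq:gauss-equation}, the Gauss--Bonnet theorem $\int_\Sigma K\,dA=4\pi(1-g)$, the hypothesis $\mathrm{Ric}(N)\geq 0$ valid on $\s^3_\alpha$ for $\alpha\leq 2$, and the gonality bound on $d$, arithmetic manipulation produces the bound $g\leq 3$.

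The main obstacle in both parts is arranging the zero-mean condition while keeping the resulting inequality tight enough to conclude. In (1), this amounts to the Minkowski-type identity and the bookkeeping that makes $\sum_a Q(f_a)$ collapse to a pure umbilicity integrand, a cancellation specific to the $\s^3$ geometry. In (2), the delicate step is the simultaneous recentering of the three coordinate functions via M\"obius transformations of $\s^2$---a nontrivial topological fixed-point argument---together with the numerical optimization that converts the resulting inequality into the sharp bound $g\leq 3$ rather than a weaker estimate.
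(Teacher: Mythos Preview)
Your proposal is sound. For part~(2) you follow essentially the same route as the paper: inside the proof of Theorem~\ref{thm:compact-stable-CMC-surfaces} the authors take a meromorphic map $\phi:\Sigma\to\s^2$ of degree $d\le 1+[(g+1)/2]$, rebalance it \`a la Yang--Yau, test stability with its three coordinates, and combine with the Gauss equation and Gauss--Bonnet to obtain $2-g+[(g+1)/2]>0$, hence $g\le 3$. The only difference is that where you invoke $\mathrm{Ric}(N)\ge 0$ abstractly, they use the explicit bound $(1-\alpha)C^2\ge -1$ for $\alpha\le 2$ together with $C^2\not\equiv 1$ to secure the strict inequality.

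For part~(1) you follow the original Barbosa--Do Carmo--Eschenburg argument with the test functions $f_a=Hu_a+v_a$, whereas the paper supplies two genuinely different alternatives. In Remark~\ref{remark:new-proof-BCE} it reuses the conformal-map method of part~(2) but couples it with the Li--Yau bound $\int_\Sigma(H^2+1)\,dA\ge 4\pi\mu$ for surfaces in $\r^4$, forcing $2-g-2\mu+[(g+1)/2]\ge 0$ and then equality in Li--Yau, so $\Sigma$ is an umbilical sphere. In the remark following Theorem~\ref{thm:compact-CMC-stable-1/3<alpha<1} it instead tests stability with a pair of harmonic vector fields $X,X^*$ viewed in $\r^4$, obtaining $Q(X)+Q(X^*)=\int_\Sigma(-4H^2-4)|X|^2\,dA<0$ directly whenever $g\ge 1$. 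Your approach is the classical pointwise-umbilicity argument; the paper's variants are tailored to its ambient-embedding philosophy and transplant more readily to the Berger setting, at the cost of invoking the Li--Yau inequality or Hodge theory rather than the elementary coordinate identities you use.
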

Now we prove two stability Theorems and the more important ingredient to do that will be to consider the constant mean curvature surfaces of $\s^3_\alpha$ as surfaces in $\m^4(1-\alpha)$ or even in $\r^8$ when $\alpha<1$.

\begin{theorem}\label{thm:compact-stable-CMC-surfaces}
Let $\Phi:\Sigma\rightarrow\s^3_\alpha$ be a constant mean curvature immersion of an orientable compact surface $\Sigma$ with $\alpha_1\leq\alpha\leq 4/3,\,\alpha\not=1$, where $\alpha_1\approx 0.217$. If $\Sigma$ is stable, then $\Sigma$ is either a sphere (in this case all the spheres are stable) or $\Sigma$ is an embedded torus satisfying
\[
\begin{cases}
 (H^2+1)\hbox{Area}\,(\Sigma)<4\pi\quad \text{when}\quad \alpha_0\leq\alpha<1,\\
 H^2\hbox{Area}\,(\Sigma)<4\pi,\quad\text{ when}\quad 1<\alpha\leq 4/3.
 \end{cases}
 \]
\end{theorem}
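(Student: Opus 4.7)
The plan is to combine Ros's genus estimate with a Willmore/stability duality: view $\Sigma$ as a codimension-two surface in $\m^4(1-\alpha)$ via $F_\alpha$, apply the Willmore-type lower bound of Montiel and Urbano \cite{MU} there, and match it against an upper bound for the same integrand produced by the stability of $\Phi$. Since $\alpha\leq 4/3<2$, Theorem \ref{thm:compact-stable-CMC-surfaces-genus<=3}(2) gives $g(\Sigma)\leq 3$, and the sphere case is settled at once by Theorem \ref{thm:stability-spheres}: the hypothesis $\alpha\geq\alpha_1>\alpha_0$ makes every CMC sphere stable. It thus remains to rule out $g=2,3$ and to extract the area bound when $g=1$.

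Set $\iota=F_\alpha\circ\Phi:\Sigma\to\m^4(1-\alpha)$. From \eqref{eq:second-ff-berger-M4} the mean curvature vector $\vec{\tilde H}$ of $\iota$ satisfies
\[
|\vec{\tilde H}|^2=H^2+\Bigl(\sqrt{\alpha}+\tfrac{\alpha-1}{2\sqrt{\alpha}}(1-C^2)\Bigr)^{\!2},
\]
and $C$ coincides (up to sign) with the K\"ahler function of $\iota$, so the Montiel-Urbano bound in $\m^4(1-\alpha)$ produces a lower bound of the form
\[
\int_\Sigma\bigl(|\vec{\tilde H}|^2+\mathcal{R}_{1-\alpha}(C)\bigr)\,dA\;\geq\;4\pi,
\]
with $\mathcal{R}_{1-\alpha}$ an explicit polynomial in $C$ that vanishes when $\alpha=1$, whose equality case is governed by the pseudo-umbilical classification of Theorem \ref{thm:pseudo-umbilical-hypersurfaces-M4}.

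For the matching upper bound I would rewrite \eqref{eq:Jacobi-operator} as $|\sigma|^2+\mathrm{Ric}(N)=-2K+4(H^2+1)+4(1-\alpha)C^2$ and feed a balanced mean-zero test function into $Q(f)\geq 0$; combined with Gauss--Bonnet $\int_\Sigma K\,dA=4\pi(1-g)$, this converts stability into an inequality
\[
\int_\Sigma\bigl(|\vec{\tilde H}|^2+\mathcal{R}_{1-\alpha}(C)\bigr)\,dA\;\leq\;4\pi+8\pi g\cdot\kappa(\alpha),
\]
where the weight $\kappa(\alpha)$ vanishes at $\alpha=\alpha_1$. Comparing with the previous lower bound forces $g\leq 1$ throughout the range $\alpha_1\leq\alpha\leq 4/3$; the threshold $\alpha_1\approx 0.217$ is precisely the value at which the two sides first become compatible with $g=1$. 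When $g=1$ the comparison collapses to the stated strict inequalities $(H^2+1)\mathrm{Area}(\Sigma)<4\pi$ for $\alpha<1$ and $H^2\mathrm{Area}(\Sigma)<4\pi$ for $\alpha>1$. A Li--Yau-type multiplicity estimate for $\iota$ in $\m^4(1-\alpha)$ doubles the Willmore lower bound for non-embedded immersions, so the strict upper bound above rules that out, yielding embeddedness of the torus.

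The main technical obstacle is the construction of the stability test function: the Jacobi function $C$ gives $Q(C)=0$, constants cannot be made mean-zero, and the natural candidates built from $|\sigma|^2+\mathrm{Ric}(N)$ are not mean-zero either. One must therefore produce a balanced combination--most likely a polynomial in $1$ and $C$ adjusted via the integrability relations \eqref{eq:integrability-conditions}--whose Rayleigh quotient reproduces the Willmore integrand closely enough to expose the sharp threshold $\alpha_1$.
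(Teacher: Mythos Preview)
Your high-level architecture is correct---pair a stability upper bound for a Willmore-type integrand with the Montiel--Urbano lower bound for $F_\alpha\circ\Phi$ in $\m^4(1-\alpha)$---and your computation of $|\vec{\tilde H}|^2$ matches the paper. But the piece you flag as ``the main technical obstacle'' is the actual heart of the argument, and your guess about it is wrong: the test function is \emph{not} a polynomial in $1$ and $C$. The paper uses the Brill--Noether/Yang--Yau device: take a nonconstant meromorphic map $\phi:\Sigma\to\s^2$ of degree $d\leq 1+[(g+1)/2]$, compose with a conformal balancing $F$ so that $\int_\Sigma F\circ\phi\,dA=0$, and apply stability to the three coordinate functions of $F\circ\phi$. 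Since $|F\circ\phi|^2=1$ and $\int_\Sigma|\nabla(F\circ\phi)|^2\,dA=8\pi d$, this gives directly
\[
2\pi\bigl(2-g+[(g+1)/2]\bigr)\;\geq\;\int_\Sigma\bigl(H^2+1+(1-\alpha)C^2\bigr)\,dA.
\]
There is no mysterious weight $\kappa(\alpha)$ on the genus term; the $g$-dependence enters only through the Brill--Noether degree bound.

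Consequently your description of where $\alpha_1$ comes from is off. After adding half of the Montiel--Urbano inequality $\int_\Sigma(|\vec{\tilde H}|^2+2(1-\alpha)+2|1-\alpha|\,|C|)\,dA\geq 4\pi\mu$ (note that the multiplicity $\mu$ is already built in here, not applied later) to the stability inequality above, the right-hand side becomes $\int_\Sigma(\tfrac{H^2}{2}+\tfrac{1}{8\alpha}F(|C|,\alpha))\,dA$, where $F$ is an explicit quartic in $|C|$ and quadratic in $\alpha$. The threshold $\alpha_1$ is defined as the smallest $\alpha<1$ for which $F(t,\alpha)\geq 0$ for all $t\in[0,1]$; this is a root analysis of a one-parameter family of quadratics in $\alpha$, and the upper bound $4/3$ arises the same way on the $\alpha>1$ side. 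Once $F\geq 0$, the combined inequality reads $2-g-\mu+[(g+1)/2]\geq 0$, and the equality cases of Montiel--Urbano (genus zero only) kill $g=2,3$ and $g=1,\mu=2$, leaving the sphere or an embedded torus. The area estimates then drop out of the two displayed inequalities. Without the Yang--Yau test function this chain does not start, so as written your proposal is a strategy sketch with the key step missing.
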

\begin{remark}
The above result is also true when $\Sigma$ is a {\it complete} orientable surface, because $\s^3_{\alpha},\,\alpha<2$ has positive Ricci curvature and then the compactness of $\Sigma$ comes from Theorem 1 in \cite{RR}.
\end{remark}
\begin{proof}
We use a known argument, coming from the Brill-Noether theory, to get test functions in order to study the stability. Using this theory, we can get a nonconstant meromorphic map $\phi:\Sigma\rightarrow \s^2$ of degree $d\leq 1+[(g+1)/2]$, where $[.]$ stands for integer part and $g$ is the genus of $\Sigma$. Although the mean value of $\phi$ is not necessarily zero, using an argument of Yang and Yau \cite{YY}, we can find a conformal transformation $F:\s^2\rightarrow\s^2$ such that $\int_{\Sigma}(F\circ\phi)\,dA=0$, and so the stability of $\Sigma$ implies that  $0\leq Q(F\circ\phi)$. From \eqref{eq:def-stability}, the Gauss equation \eqref{eq:gauss-equation} and as $|F\circ\phi|^2=1$ it follows that
\[
\int_{\Sigma}|\nabla(F\circ\phi)|^2\,dA\geq 2\int_{\Sigma}(2H^2-K+2+2(1-\alpha)C^2)\,dA.
\]
But $\int_{\Sigma}|\nabla(F\circ\phi)|^2\,dA=8\pi\text{degree}\,(F\circ\phi)=8\pi\text{degree}\,(\phi)\leq 8\pi(1+[(g+1)/2]$. Hence using the Gauss-Bonnet Theorem, the above inequality becomes in
\begin{equation}\label{eq:integral-formula}
2\pi(2-g+[(g+1)/2])\geq \int_{\Sigma}(H^2+1+(1-\alpha)C^2)\,dA.
\end{equation}
First, if $\s^3_{\alpha}$ has nonnegative Ricci curvature, i.e., $\alpha\leq 2$, then as $C^2\leq 1$ it follows that $(1-\alpha)C^2\geq -1$ and hence \eqref{eq:integral-formula} becomes in
\[
2\pi(2-g+[(g+1)/2])> \int_{\Sigma}H^2\,dA\geq 0,
\]
because the function $C^2$ cannot be identically $1$ on $\Sigma$. The above inequality implies that the genus $g$ of $\Sigma$ is $g\leq 3$. This proves the result of Ros \cite{R2} in the case of the Berger spheres (Theorem~\ref{thm:compact-stable-CMC-surfaces-genus<=3},(2)).

Now, in order to prove the Theorem, we use the following result of Montiel and Urbano \cite{MU}:
\begin{quote}
{\em Let $\tilde{\Phi}:\Sigma\rightarrow\m^4(1-\alpha)$ be an immersion of a compact surface and $\mu$ its maximum multiplicity. If $\tilde{H}$ is the mean curvature vector of $\tilde{\Phi}$ and $C$ the Kähler function, then
\[
\int_{\Sigma}(|\tilde{H}|^2+2(1-\alpha)+2|(1-\alpha)C|)\,dA\geq4\pi\mu.
\]
Moreover the equality holds only for certain surfaces of genus zero.}
\end{quote}
In \cite{MU}, the above result was proved only when the ambient space was the complex projective plane, but, as it was indicated in \cite{MU}, slight modifications of the proof would prove the result also for the complex hyperbolic plane.

We consider the Montiel-Urbano result for the immersion $\tilde{\Phi}=F_\alpha\circ\Phi:\Sigma\rightarrow \m^4(1-\alpha)$. Then, from \eqref{eq:second-ff-berger-M4} it is easy to see that
\[
|\tilde{H}|^2=H^2+\frac{(3\alpha-1+(1-\alpha)C^2)^2}{4\alpha},
\]
and so \eqref{eq:integral-formula} joint with the inequality of Montiel and Urbano becomes in
\begin{equation}\label{eq:integral-formula-MU}
2\pi(2-g-\mu+[(g+1)/2])\geq \int_{\Sigma}(\frac{H^2}{2}+\frac{F}{8\alpha})\,dA,
\end{equation}
where $F=-(C^4+2C^2-8\epsilon|C|+1)\alpha^2+2(C^4-4\epsilon|C|+3)\alpha-(1-C^2)^2$, being $\epsilon$ the sign of $1-\alpha$.

To determine for which values of $\alpha$ the function $F$ is nonnegative and as $0\leq|C|\leq 1$ and $C^2=|C|^2$, we define for each $t\in[0,1]$ the second degree polynomial $P_t:\r\rightarrow\r$ by
\[
P_t(\alpha)=A(t)\alpha^2+B(t)\alpha+C(t),
\]
where $A(t)=-(t^4+2t^2-8\epsilon t+1)$, $B(t)=2(t^4-4\epsilon t+3)$ and $C(t)=-(1-t^2)^2$.

Two important properties of this second degree polynomial are:
\begin{enumerate}
\item $P_t(1)=4,\,\forall t\in[0,1]$,
\item the discriminant of $P_t$ is given by $32(t-\epsilon)^2(1+t^2)$ and so for each $t\in[0,1]$ $P_t$ have two roots except when $t=1$ and $\epsilon=1$.
\end{enumerate}

 First we consider the case $\epsilon=-1$, i.e., $\alpha>1$. In this case $A(t)<0,\forall t\in[0,1]$ and the root $\alpha(t)$ of the above polynomial given by
\[
\alpha(t)=\frac{t^4+4t+3+2(1+t)\sqrt{2(1+t^2)}}{t^4+2t^2+8t+1}
\]
is always greater than $1$. Hence for each $t\in[0,1]$, $P_t(\alpha)\geq 0$ if $1<\alpha\leq\alpha(t)$. Hence if $\alpha\leq\min_{t\in[0,1]}\alpha(t)=\frac{4}{3}$ then $P_t(\alpha)\geq 0,\,\forall t\in[0,1]$. This proves that $F\geq 0$ if $1<\alpha\leq 4/3$.

Second we consider the case $\epsilon=1$, i.e., $\alpha<1$. In this case we have that there exists a unique $t_0\in]0,1[$ ($t_0\approx 0,1292$) with $A(t_0)=0$. Moreover if $0\leq t<t_0$ (respectively $t_0< t\leq1$) then $A(t)<0$ (respectively $A(t)>0$). Now the root $\alpha(t),\,t\not=t_0$ of the above polynomial given by
\[
\alpha(t)=\frac{t^4-4t+3-2(1-t)\sqrt{2(1+t^2)}}{t^4+2t^2-8t+1}
\]
satisfies $0<\alpha(t)<1$. Hence for each $0\leq t< t_0$, and as in this case $A(t)<0$, it follows that $P_t(\alpha)\geq 0$ if $\alpha(t)\leq\alpha<1$. Also, if $t_0< t\leq 1$ it is easy to see that the other root of the polynomial
\[
\beta(t)=\frac{t^4-4t+3+2(1-t)\sqrt{2(1+t^2)}}{t^4+2t^2-8t+1}
\]
satisfies $\beta(t)\leq\alpha(t)$ and hence, as in this case $A(t)>0$, it follows also that $P_t(\alpha)\geq 0$ if $\alpha(t)\leq\alpha<1$.

Hence $P_t(\alpha)\geq 0,\,\forall t\in[0,1]$ if $\alpha\geq\max_{t\in[0,1]}\alpha(t)$. It is clear that the function $\alpha(t)$ with $t\in[0,1]$ has only a critical point which is a maximum. We define $\alpha_1=\max_{t\in[0,1]}\alpha(t)\approx 0.217 $. Hence $P_t(\alpha)\geq 0,\,\forall t\in[0,1]$ when $\alpha_1\leq\alpha<1$. This proves that $F\geq 0$ if $\alpha_1\leq\alpha<1$.

 Finally we have proved that $F\geq 0$ if $\alpha_1\leq \alpha\leq 4/3$. Hence from \eqref{eq:integral-formula-MU} we obtain that $2-g-\mu+[(g+1)/2]\geq 0$. This inequality implies that besides the surfaces of genus zero only the following possibilities can happen: $g=1,\mu\leq 2$ or $g=2,3,\mu=1$. But except for the case $g=1$ and $\mu=1$, for the other three possibilities ($g=1,\mu=2$ and $g=2,3, \mu=1$) the equality is attained in the above inequality. In particular the equality is also attained in the Montiel-Urbano inequality, but this is impossible because the genus is not zero. Hence if $\Sigma$ is not a sphere, $\Sigma$ must be an embedded torus. Finally the estimation of the areas comes directly from \eqref{eq:integral-formula} and \eqref{eq:integral-formula-MU}.
\end{proof}

\begin{remark} \label{remark:new-proof-BCE}
The idea developed in the proof of Theorem $6$ can be used to give a new proof of the Barbosa, DoCarmo and Eschenburg result (Theorem~\ref{thm:compact-stable-CMC-surfaces-genus<=3},1) \cite{BCE}). In fact, if $\Phi:\Sigma\rightarrow\s^3$ is a stable constant mean curvature immersion of an orientable compact surface $\Sigma$, then following the proof of Theorem $6$ we obtain in this case
\[
2\pi(2-g+[(g+1)/2])\geq \int_{\Sigma}(H^2+1)\,dA.
 \]
But if $\tilde{H}$ is the mean curvature vector of the immersion $\Phi:\Sigma\rightarrow\s^3\subset\r^4$, then $|\tilde{H}|^2=H^2+1$ and so, using a result of Li and Yau \cite{LY},
\[
\int_{\Sigma}(H^2+1)\,dA=\int_{\Sigma}|\tilde{H}|^2dA\geq 4\pi\mu,
\]
being $\mu$ the maximum multiplicity of the immersion. So the above equation becomes in
\[
2-g-2\mu+[(g+1)/2]\geq0.
\]
So $g=0,1$ and $\mu=1$. In both cases the equality holds in the last inequality and hence $\int_{\Sigma}|\tilde{H}|^2dA= 4\pi$. This implies that the surface is an umbilical sphere.
\end{remark}

\begin{theorem}\label{thm:compact-CMC-stable-1/3<alpha<1}
Let $\Phi:\Sigma\rightarrow\s^3_\alpha$ be a constant mean curvature immersion of an orientable compact surface $\Sigma$ with $1/3\leq\alpha<1$. Then $\Sigma$ is stable if and only if $\Sigma$ is either a sphere (in this case all the spheres are stable) or $\Sigma$ is the Clifford torus $T_{1/3} (0)$ in $\s^3_{1/3}$.
\end{theorem}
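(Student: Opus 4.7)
The strategy is to combine Theorem~\ref{thm:compact-stable-CMC-surfaces} with a test-function argument that uses the isometric chain $\s^3_\alpha \hookrightarrow \c\p^2(1-\alpha) \hookrightarrow \r^8$ to rule out every torus except the Clifford one in $\s^3_{1/3}$.

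Since $\alpha_1 < 1/3 \leq \alpha < 1 < 4/3$, Theorem~\ref{thm:compact-stable-CMC-surfaces} forces $\Sigma$ to be either a sphere -- which is stable by Theorem~\ref{thm:stability-spheres}, since $\alpha_0 \approx 0.121 < 1/3$ -- or an embedded torus satisfying $(H^2+1)\hbox{Area}(\Sigma) < 4\pi$. The rest of the proof eliminates every such torus but $\mathcal{T}_{1/3}(0)$. Put $\hat\Phi := \Psi \circ F_\alpha \circ \Phi : \Sigma \to \r^8$, where $\Psi$ is the first standard embedding of $\c\p^2(1-\alpha)$ into $\r^8$; each factor is isometric. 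Because the genus is one, the space of harmonic 1-forms on $\Sigma$ is two-dimensional. For such an $\omega$, let $X_\omega$ be its metric-dual tangent vector field (which is divergence-free) and set $\vec Z_\omega := d\hat\Phi(X_\omega) : \Sigma \to \r^8$. Integration by parts using $\mathrm{div}\, X_\omega = 0$ shows that $\int_\Sigma \vec Z_\omega\, dA = 0$, so each of the eight coordinates is a volume-preserving test function. Plugging into $Q$ and summing yields
\[
0 \leq \sum_{i=1}^{8} Q(Z_{\omega, i}) = \int_\Sigma \bigl[ |\nabla \vec Z_\omega|^2 - (|\sigma|^2 + \text{Ric}(N))\, |X_\omega|^2 \bigr]\, dA.
\]

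Now decompose the second fundamental form $\tilde\sigma$ of $\Sigma$ in $\r^8$ orthogonally as $\tilde\sigma = \sigma + \hat\sigma + \bar\sigma$ along the three successive normal directions; this yields $|\nabla \vec Z_\omega|^2 = |\nabla^\Sigma X_\omega|^2 + \sum_j |\tilde\sigma(e_j, X_\omega)|^2$. The summands $\hat\sigma$ and $\bar\sigma$ are given explicitly by \eqref{eq:second-ff-berger-M4} and \eqref{eq:second-ff-M4-R8}; equation \eqref{eq:Jacobi-operator} provides $|\sigma|^2 + \text{Ric}(N) = 4(H^2+1) + 4(1-\alpha)C^2 - 2K$; and Bochner for a harmonic 1-form on a surface gives $\int_\Sigma |\nabla X_\omega|^2\, dA = -\int_\Sigma K\, |X_\omega|^2\, dA$. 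Summing the resulting inequality over an orthonormal basis of harmonic 1-forms trades the anisotropic factor $|X_\omega|^2$ for a scalar trace, converting the inequality into a pointwise quadratic in $H$, $C$, $K$, $\alpha$ and the K\"ahler angle of $\Sigma$ in $\c\p^2$, integrated over $\Sigma$. Together with Gauss-Bonnet ($\int_\Sigma K\, dA = 0$) and the area bound $(H^2+1)\hbox{Area}(\Sigma) < 4\pi$ from Theorem~\ref{thm:compact-stable-CMC-surfaces}, this forces equality throughout, whence $K \equiv 0$, $C \equiv 0$, $H = 0$ and $\alpha = 1/3$. Hence $\Sigma$ is a flat Hopf torus in $\s^3_{1/3}$ of zero mean curvature, and Proposition~\ref{prop:stability-flat-tori} identifies it as the Clifford torus $\mathcal{T}_{1/3}(0)$.

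The main obstacle is the tensor bookkeeping in the preceding step: both $\bar\sigma$ and $\hat\sigma$ couple to the K\"ahler angle of $\Sigma$ in $\c\p^2$ and to the tangential component of $\xi$, so one must compute $\sum_j |\tilde\sigma(e_j, X_\omega)|^2$ explicitly and then, after averaging over the two-dimensional space of harmonic 1-forms, show that all anisotropic contributions combine with $|\sigma|^2 + \text{Ric}(N)$ into a sharp integral inequality saturated exactly at $\mathcal{T}_{1/3}(0)$.
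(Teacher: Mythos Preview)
Your overall strategy coincides with the paper's: use harmonic vector fields on $\Sigma$ as test functions via the chain $\Sigma\hookrightarrow\s^3_\alpha\hookrightarrow\c\p^2(1-\alpha)\hookrightarrow\r^8$, then exploit \eqref{eq:second-ff-berger-M4} and \eqref{eq:second-ff-M4-R8}. Where your proposal diverges from the paper is precisely at the step you flag as ``the main obstacle,'' and the tools you name there will not finish the argument.

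The paper does \emph{not} average over an $L^2$-orthonormal basis of harmonic $1$-forms, and it uses neither Gauss--Bonnet nor the area bound from Theorem~\ref{thm:compact-stable-CMC-surfaces}. Instead it pairs a single harmonic field $X$ with its pointwise rotate $X^*=J_0X$ (also harmonic). Because $\{X,X^*\}$ is a pointwise orthogonal frame with $|X|=|X^*|$, the anisotropic terms simplify \emph{pointwise}: $|AX|^2+|AX^*|^2=|\sigma|^2|X|^2$ and $\langle X,\xi\rangle^2+\langle X^*,\xi\rangle^2=(1-C^2)|X|^2$. After this and a short computation one lands on
\[
Q(X)+Q(X^*)=\int_{\Sigma}\Bigl(-4H^2-4\alpha+\tfrac{(\alpha-1)^2}{\alpha}(1-C^2)^2\Bigr)|X|^2\,dA,
\]
whose bracketed integrand is \emph{pointwise} bounded above by $-4H^2+\tfrac{(1-3\alpha)(1+\alpha)}{\alpha}\le 0$ for $\alpha\ge 1/3$. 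Stability then forces $H=0$, $\alpha=1/3$, $C\equiv 0$ directly, and flatness follows a posteriori from $C\equiv 0$.

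Your plan to ``trade $|X_\omega|^2$ for a scalar trace'' by summing over an $L^2$-orthonormal basis does not produce a constant weight unless the torus is already flat, so the subsequent appeal to $\int_\Sigma K\,dA=0$ and to the area bound $(H^2+1)\text{Area}(\Sigma)<4\pi$ has nothing to bite on. The missing idea is exactly the $(X,J_0X)$ pairing, which keeps the weight $|X|^2$ but renders the integrand itself non-positive; once you have that, Theorem~\ref{thm:compact-stable-CMC-surfaces} is not needed at all (the paper's argument works for every genus $g\ge 1$ in one stroke).
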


\begin{remark}
The argument of Remark 3 shows again that the above result is also true when $\Sigma$ is a complete orientable surface.

The number $\alpha_1$ appearing in Theorem $6$ satisfies $\alpha_1<1/3$. So to prove Theorem $7$ we can assume that our surface is an embedded torus. However for completeness we will prove the result without this assumption.
\end{remark}

\begin{proof}
Theorem \ref{thm:stability-spheres} and Proposition \ref{prop:stability-flat-tori} say that any constant mean curvature sphere is stable and that the Clifford torus $T_{1/3} (0)$ is also stable.

Suppose now that $\Sigma$ is stable and that the genus $g$ of $\Sigma$ is $g\geq 1$.

As $\alpha<1$, we consider (see section 2) the embedding $F_{\alpha}:\s^3_{\alpha}\rightarrow\c\p^2(1-\alpha)$ and the first standard embedding $\Psi:\c\p^2(1-\alpha)\rightarrow\r^8$. So we will consider our surface immersed in $\r^8$, by $\hat{\Phi}=\Psi\circ F_{\alpha}\circ\Phi:\Sigma\longrightarrow\r^8$.

   From the Hodge theory we now that the linear space of harmonic vector fields on $\Sigma$ has dimension $2g$. So let $X$ be a harmonic vector field of $\Sigma$. Then it is well-known that
   \begin{equation}\label{eq:harmonic-vector-field}
   \text{div}\,X=0\quad \text{and}\quad \Delta^{\Sigma}\,X=KX,
   \end{equation}
   where $\text{div}$ is the divergence operator of $\Sigma$ and $\Delta^{\Sigma}=\sum_{i=1}^2\{\nabla_{e_i}\nabla_{e_i}-\nabla_{\nabla_{e_i}e_i}\}$ is the rough Laplacian of $\Sigma$, being $\{e_1,e_2\}$ an orthonormal reference in $\Sigma$.

For any non-null vector $a\in\r^8$, from \eqref{eq:harmonic-vector-field} it follows that
\[
\text{div}\,(\langle\hat{\Phi},a\rangle X)=\langle X,a\rangle,
\]
and so $\int_{\Sigma}\langle X,a\rangle\,dA=0$. We are going to use the functions $\langle X,a\rangle,\,a\in\r^8$ as test functions, i.e., we consider the vectorial function $X:\Sigma\rightarrow\r^8$ as a test function. Now the stability of $\Sigma$ implies that the quadratic form $Q$ satisfies $Q(X)\geq 0$ for any harmonic vector field $X$ on $\Sigma$. Now we compute $Q(X)$.

If $\bar{\nabla}$ is the connection of $\r^8$ and $\{e_1,e_2\}$ is an orthonormal reference on $\Sigma$, then the Gauss equation of the immersion $\hat{\Phi}$ says
\[
\bar{\nabla}_{e_i}X=\nabla_{e_i}X+\langle\sigma(e_i,X),N\rangle N+\langle\hat{\sigma}(e_i,X),\eta\rangle\eta+\bar{\sigma}(e_i,X),
\]
and hence
\begin{eqnarray*}
\langle\bar{\Delta} X,X\rangle=\langle\sum_{i=1}^2\{\bar{\nabla}_{e_i}\bar{\nabla}_{e_i}-\bar{\nabla}_{\bar{\nabla}_{e_i}e_i}\}X,X\rangle\\=
\langle\Delta^{\Sigma}\,X,X\rangle-\sum_{i=1}^2\{ \langle\sigma(e_i,X),N\rangle^2+\langle\hat{\sigma}(e_i,X),\eta\rangle^2+|\bar{\sigma}(e_i,X)|^2 \}.
\end{eqnarray*}
Using \eqref{eq:harmonic-vector-field}, \eqref{eq:second-ff-berger-M4} and \eqref{eq:second-ff-M4-R8} we obtain that
\begin{equation}\label{eq:laplacian-harmonic-vector-field}
\begin{split}
\langle\bar{\Delta}\,X,X\rangle&=K|X|^2-|AX|^2-\alpha|X|^2\\&-\frac{\alpha-1}
{\alpha}(2\alpha+(\alpha-1)(1-C^2))\langle X,\xi\rangle^2-(1-\alpha)(5-C^2)|X|^2,
\end{split}
\end{equation}
where $A$ is the Weingarten endomorphism of $\Phi$ associated to the normal field $N$.

If $J_0$ is the complex structure on the Riemann surface $\Sigma$, it is clear that $X^*=J_0X$ is another harmonic vector field on $\Sigma$ and outside of the zeros of $X$, $\{X,X^*\}$ is a ortogonal reference of $T\Sigma$ with $|X|=|X^*|$. Then we have that
\begin{eqnarray*}
|AX|^2+|AX^*|^2=|\sigma|^2|X|^2,\\
\langle X,\xi\rangle^2+\langle X^*,\xi\rangle^2=(1-C^2)|X|^2.
\end{eqnarray*}
Now from equation \eqref{eq:laplacian-harmonic-vector-field} and taking into account the above relations we obtain that
\begin{equation}\label{eq:laplacian-harmonic-vector-field-dual}
\begin{split}
\langle\bar{\Delta} X,X\rangle+\langle\bar{\Delta} X^*,X^*\rangle&=2K|X|^2-|\sigma|^2|X|^2-(8-6\alpha)|X|^2\\
&-\frac{(\alpha-1)^2}{\alpha}(1-C^2)^2|X|^2.
\end{split}
\end{equation}
Using the Gauss equation \eqref{eq:gauss-equation} and \eqref{eq:laplacian-harmonic-vector-field-dual} we finally get
\[
Q(X)+Q(X^*)=\int_{\Sigma}\left(-4H^2-4\alpha+\frac{(\alpha-1)^2}{\alpha}(1-C^2)^2\right)|X|^2dA.
\]
As $C^2\geq 0$, $1-3\alpha\leq 0$ and $\Sigma$ is stable, we finally obtain that
\[
0\leq\int_{\Sigma}\left(-4H^2+\frac{(1-3\alpha)(1+\alpha)}{\alpha}\right)|X|^2dA
\leq\int_{\Sigma}-4H^2|X|^2dA.
\]
Hence we have got that if $\Sigma$ is stable and the genus $g\geq 1$, then $C=0$, $\alpha=1/3$ and $H=0$. But $C=0$ means that the Killing field $\xi$ is tangent to $\Sigma$ and hence it is parallel. So $\Sigma$ is flat, and from Proposition \ref{prop:CMC-flat-torus} $\Sigma$ is a finite cover of the Clifford torus $\mathcal{T}_{1/3}(0)$ in $\s^3_{1/3}$, and so $\Sigma$ is the Clifford torus. This finishes the proof.
\end{proof}
\begin{remark}
{\rm As in Remark~\ref{remark:new-proof-BCE}, the idea developed in the proof of Theorem $7$ can be used to give another new proof of the Barbosa, DoCarmo and Eschenburg result (Theorem \ref{thm:stability-spheres-Nil3-Sl}, 1) \cite{BCE}). In fact, we consider the round sphere $\s^3$ as a umbilical hypersurface of $\r^4$. Then, if $\Phi:\Sigma\rightarrow \s^3$ is an orientable compact stable constant mean curvature surface of genus $g\geq 1$, we take a pair of harmonic vector fields $X,X^*:\Sigma\rightarrow\r^4$, which satisfy $\int_{\Sigma}X\,dA=\int_{\Sigma}X^*\,dA=0$, as test functions. Following the proof of Theorem $7$ we obtain in this case that
 \[
\langle\bar{\Delta} X,X\rangle+\langle\bar{\Delta} X^*,X^*\rangle=2K|X|^2-|\sigma|^2|X|^2-2|X|^2.
\]
 and then
 \[
0\leq Q(X)+Q(X^*)\leq\int_{\Sigma}\left(-4H^2-4\right)|X|^2dA<0.
\]
This contradiction says that the genus of an orientable compact stable constant mean curvature surface of $\s^3$ is zero and hence the surface $\Sigma$ is a umbilical sphere.}
\end{remark}

\section{The isoperimetric problem}
 The isoperimetric problem can be stated as follows: Given a Berger sphere $\s^3_{\alpha}$ and a number $V$ with $0< V<\text{volume}\,(\s^3_{\alpha})=2\pi^2\sqrt{\alpha}$, find the embedded compact surfaces of least area enclosing a domain of volume $V$. In this setting ($\s^3_{\alpha}$ is compact) the problem has always a smooth compact solution, which is a stable constant mean curvature surface.

As in Theorem 7 we have classified the orientable compact stable constant mean curvature surfaces of the Berger spheres $\s^3_{\alpha},\,1/3\leq \alpha<1$, we can solve the isoperimetric problem in these $3$-manifolds.
\begin{quote}
{\it  The solutions of the isoperimetric problem in $\s^3_{\alpha},\,1/3\leq\alpha<1$ are the spheres $\{\mathcal{S}_\alpha(H)\, | \, H \geq 0\}$. }
\end{quote}
In fact, From Theorem 7 the above result is clear when $1/3<\alpha<1$. When $\alpha=1/3$, besides the spheres, the Clifford torus is the only stable constant mean curvature surface. The Clifford torus divides the sphere in two domains of the same volume $\pi^2\sqrt{\alpha}$. Now, among the constant mean curvature spheres of $\s^3_{1/3}$, only the minimal one $\mathcal{S}_{1/3}(0)$ divides the sphere in two domains of the same volume $\pi^2\sqrt{\alpha}$. Since the area of the Clifford torus is $A_1=2\pi^2/\sqrt{3}$, the area of $\mathcal{S}_{1/3}(0)$ is $A_2=2\pi(1+\frac{1}{\sqrt{6}}\arctanh\,(\sqrt{2}/\sqrt{3}))$ and $A_1>A_2$, we finish the proof.

For $\alpha>1$ we think that the spheres $\mathcal{S}_\alpha(H)$ are not only the solutions to the isoperimetric problem but the only compact stable constant mean curvature surfaces in $\s^3_{\alpha}$.

When $\alpha<1/3$, the problem seems to be quite different, because on the one hand there are unstable constant mean curvature spheres and on the other hand there are examples of stable constant mean curvature tori. To illustrate the isoperimetric problem in this case ($\alpha<1/3$), in figure $4$ we have drawn the area of the spheres  $\mathcal{S}_\alpha(H)$ and the tori $\mathcal{T}_\alpha(H)$ in terms of their volumes for four different Berger spheres (see \cite{Tr}).

 \begin{figure}[htbp]
        \begin{minipage}[c]{0.4\textwidth}
         \centering
                \includegraphics[width=\textwidth]{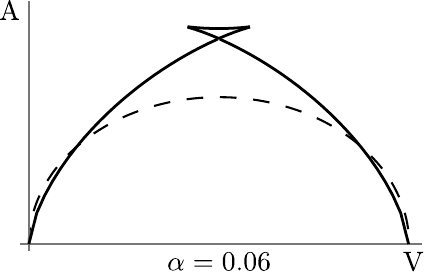}
        \end{minipage}
        \hspace{0.1\textwidth}
        \begin{minipage}[c]{0.4\textwidth}
         \centering
        \includegraphics[width=\textwidth]{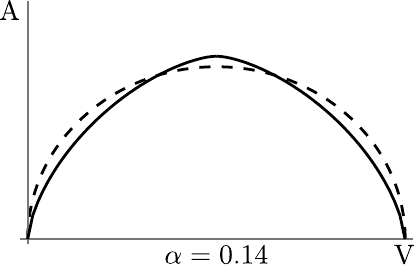}
        \end{minipage}
        \\[0.5cm]
        \begin{minipage}[c]{0.4\textwidth}
         \centering
                \includegraphics[width=\textwidth]{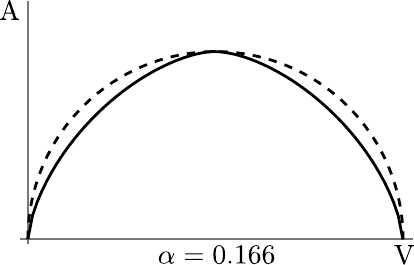}
        \end{minipage}
        \hspace{0.1\textwidth}
        \begin{minipage}[c]{0.4\textwidth}
         \centering
                \includegraphics[width=\textwidth]{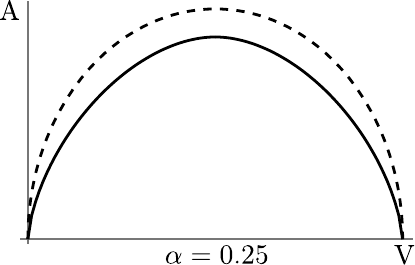}
        \end{minipage}
        \caption{Graphics of the area of $\mathcal{S}_\alpha(H)$ (solid line) and $\mathcal{T}_\alpha(H)$ (dashed line) in terms of the volume for different Berger spheres}
 \end{figure}
When $\alpha=0,25<1/3$, the spheres $\mathcal{S}_\alpha(H)$ are the candidates to solve the isoperimetric problem. But we can find a Berger sphere $\s^3_{\alpha}$ ($\alpha\approx 0,166$) for which the minimal sphere $\mathcal{S}_\alpha(0)$ and the Clifford torus $\mathcal{T}_\alpha(0)$, which divide the Berger sphere in to domains of the same volume, have the same area. So both are candidates to solve the isoperimetric problem. When $\alpha=0,14$ all the spheres $\mathcal{S}_\alpha(H)$ are stable because $\alpha_1<0,14$ (use Theorem \ref{thm:stability-spheres}), but there is an interval of volumes centered at $\pi^2\sqrt\alpha$ for which the tori $\mathcal{T}_\alpha(H)$ are the candidate to solve the isoperimetric problem. Finally, when $\alpha=0,06$ (in this case there are unstable spheres), the tori $\mathcal{T}_\alpha(H)$ are again candidate to solve the isoperimetric problem when the volume is neither close to $0$ nor $2\pi^2\sqrt\alpha$. In this case there are noncongruent spheres enclosing the same volume.

\end{document}